\newcommand{\vF}{\bm{F}}
\newcommand{\vx}{\bm{x}}
\newcommand{\vc}{\bm{c}}
\newcommand{\vk}{\bm{k}}
\newcommand{\ve}{\bm{e}}
\newcommand{\vy}{\bm{y}}
\newcommand{\vv}{\bm{v}}
\newcommand{\vz}{\bm{z}}
\newcommand{\vu}{\bm{u}}
\newcommand{\vh}{\bm{h}}
\newcommand{\vw}{\bm{w}}
\newcommand{\va}{\bm{a}}
\newcommand{\vs}{\bm{s}}
\newcommand{\vn}{\bm{n}}
\newcommand{\vb}{\bm{b}}
\newcommand{\vq}{\bm{q}}
\newcommand{\vt}{\bm{t}}
\newcommand{\vzero}{\bm{0}}
\newcommand{\veps}{\bm{\varepsilon}}
\newcommand{\vgamma}{\bm{\gamma}}
\newcommand{\vlambda}{\bm{\lambda}}
\newcommand{\mA}{\mathsf{A}}
\newcommand{\mM}{\mathsf{M}}
\newcommand{\mD}{\mathsf{D}}
\newcommand{\mQ}{\mathsf{Q}}
\newcommand{\mK}{\mathsf{K}}
\newcommand{\mI}{\mathsf{I}}
\newcommand{\mP}{\mathsf{P}}
\newcommand{\mR}{\mathsf{R}}
\newcommand{\mZ}{\mathsf{Z}}
\renewcommand{\Re}{\mathrm{Re}}
\renewcommand{\Im}{\mathrm{Im}}
\newcommand{\real}{\mathbb{R}}
\newcommand{\complex}{\mathbb{C}}
\newcommand{\znat}{\mathbb{Z}}
\DeclareMathOperator{\linspan}{span}
\DeclareMathOperator{\diag}{diag}
\DeclareMathOperator{\range}{range}
\DeclareMathOperator{\rank}{rank}
\DeclareMathOperator{\argmin}{argmin}
\DeclareMathOperator{\nullspace}{null}
\newcommand{\cA}{\mathcal{A}}
\title{Design and control of quasiperiodic patterns of particles with standing acoustic waves
\thanks{Submitted to the editors DATE.
\funding{The authors acknowledge support from the National Science Foundation grants DMS-2008610, 
DMS-2136198, 
DMS-2111117,  
DMS-2206171,  
DMS-1715680   
and the Army Research Office under Contract No. W911NF-16-1-0457.
}}}
\author{Elena Cherkaev\thanks{
	Mathematics Department, University of Utah, Salt Lake City, UT 84112
	(\email{elena@math.utah.edu}, \email{fguevara@math.utah.edu}).
} 
\and Fernando Guevara Vasquez\footnotemark[2]
\and China Mauck\footnotemark[2]
\thanks{Currently: STV Incorporated, 200 W Monroe St {\#}1650, Chicago, IL 60606.}
}
\begin{document}

\maketitle

\begin{abstract}
We develop a method to design tunable quasiperiodic structures of particles
 suspended in a fluid by controlling standing acoustic waves. One application of
 our results is to ultrasound directed self-assembly, which allows fabricating
 composite materials with desired microstructures. Our approach is based on
 identifying the minima of a functional, termed the acoustic radiation
 potential, determining the locations of the particle clusters.  
This functional can be viewed as a two- or three-dimensional slice of a similar
functional in higher dimensions as in the cut-and-project method of constructing
quasiperiodic patterns. The higher dimensional representation allows for
translations, rotations, and reflections of the patterns. Constrained
optimization theory is used to characterize the quasiperiodic designs based on
local minima of the acoustic radiation potential and to understand how changes
to the controls affect particle patterns. We also show how to transition
smoothly between different controls, producing smooth transformations of the
quasiperiodic patterns.   
The developed approach unlocks a route to creating  
tunable quasiperiodic and moir{\'e} structures known for their unconventional
superconductivity and other extraordinary properties.   
Several examples of constructing quasiperiodic structures,  
including in two and three dimensions, are given. 
\end{abstract}

\begin{keywords}
 Waves, Particle manipulation, quasiperiodicity, quasicrystals, Helmholtz equation, moir\'e patterns
\end{keywords}

\begin{MSCcodes}
	35J05, 
	74J05, 
	82D25 
\end{MSCcodes}

\headers{Design and control of quasiperiodic patterns}{E. Cherkaev, F. Guevara Vasquez, and C. Mauck}

\section{Introduction}
\label{sec:intro}

Small (compared to the wavelength) particles immersed in a fluid move under the pressure of acoustic waves and can be forced to create specific targeted structures \cite{Greenhall:2015:UDS,Guevara:2019:PPA, Prisbrey:2017:UDS}. We consider the problem of designing tunable quasiperiodic structures by controlling the acoustic wavefield. 
Quasiperiodic arrangements of particles within a fluid can be obtained by
establishing particular patterns of standing acoustic waves using ultrasound transducers
\cite{Cherkaev:2021:WDA}. 
The current work develops methods for predicting the locations of the
particles' clusters and controlling 
continuous changes of 
the quasiperiodic
structures, including their translations, rotations, and reflections. The approach can also be used to create moir{\'e} structures known for their unusual qualities, such as superconductivity and other
extraordinary properties, see e.g. \cite{Cao:2018:USM,Kamiya:2018:DSQ,Morison:2022:CP}. 

We relate constructed quasiperiodic structures to aperiodic tilings of the
plane (or space). Perhaps the best-known aperiodic tiling of the plane was designed by
Penrose \cite{Gardner:1977:ENT, Penrose:1974:ROP, Penrose:1979:PCN}. De Bruijn \cite{deBruijn:1981:ATP} showed that every Penrose tiling can be obtained by projecting a $5-$dimensional periodic lattice
to a $2-$dimensional plane. This ``cut-and-project" method is not restricted to constructing Penrose
tilings and applies to other kinds of quasiperiodic tilings in other dimensions \cite{Austin:2005:PTT, Gahler:1986:EGG, Korepin:1988:QPT,Wellander:2018:MMAS}. 

In addition to theoretical studies of quasiperiodic tilings \cite{Gardner:1977:ENT, Penrose:1974:ROP, Penrose:1979:PCN,deBruijn:1981:ATP}, physical
quasicrystals
\cite{Shechtman:1984:MPL,Yamamoto:1996:CQC,Janot:2012:Q,Stadnik:1999:PPQ} have
been realized and studied beginning with the 
discovery by Dan Shechtman of
an Al-Mn alloy with icosahedral symmetry \cite{Shechtman:1984:MPL}. Since this
discovery, many groups have synthesized
quasicrystals in various ways. In particular, quasiperiodic structures have
been observed experimentally with lasers
\cite{Roichman:2005:HAQ,Wang:2006:ROT,Mikhael:2008:ALT,Sun:2015:FTF} and
ultrasound waves \cite{Espinosa:1993:AQC}.  Materials with quasicrystalline structures have  been shown to possess  remarkable, extraordinary physical properties (see, e.g., \cite{Janot:2012:Q,Stadnik:1999:PPQ}).


Methods of creating quasiperiodic structures developed in the present paper apply to any linear wave-like phenomena that can be modeled with
finitely many plane waves. A potential application of our work is ultrasound-directed self-assembly \cite{Greenhall:2015:UDS, Prisbrey:2017:UDS}, a material fabrication method in which a liquid resin containing small particles is placed in a reservoir lined with ultrasound transducers. 
The transducers operating at
a fixed frequency generate a standing acoustic wave in the liquid resin 
forcing the particles to cluster at certain locations as 
they are subject to an acoustic radiation force \cite{Gorkov:1962:FAS,
King:1934:ARP, Bruus:2012:ARF, Settnes:2012:FAS}; this force can be described
in terms of an acoustic radiation potential (ARP).  
Curing the resin results
in a material with particles 
aggregated 
at locations determined by the standing ultrasound
wave. 
Other situations where particles are arranged with ultrasound include
microfluidics \cite{Friend:2011:MAM, Wiklund:2013:UEI} and acoustic tweezers
\cite{Meng:2019:AT, Courtney:2014:ITM, Marzo:2019:HAT, Ozcelik:2018:ATL}. Further potential  application of the developed method is to assemble structures with light, see e.g. \cite{Ashkin:1987:OTM,Neuman:2004:OT,Nieminen:2007:POT}.   
Moreover, the created patterns of the quasiperiodic structures could change with time, 
resulting in time-dependent metamaterials, as in e.g. acoustically configurable crystals \cite{Caleap:2014:ATC, Silva:2019:PPU}.

\subsection{Mathematical model}
\label{sec:mathmod}
The pressure field $p$ generated by a standing acoustic wave satisfies the Helmholtz
equation $\Delta p + k^2 p = 0$, where $k = 2\pi/\lambda$ is the wavenumber, and
$\lambda$ is the wavelength (see e.g. \cite{Colton:1998:IAE}.) When small
(relative to $\lambda$) particles are placed in the standing acoustic wave
field, they are subject to an acoustic radiation force \cite{Gorkov:1962:FAS,
King:1934:ARP, Bruus:2012:ARF, Settnes:2012:FAS}. This force can be described
in terms of an acoustic radiation potential (ARP) $\psi$ as $\vF = -\nabla \psi$.
Since the net forces point in the direction of the negative gradient of the
potential $\psi$, microparticles subject to these forces cluster at the local
minima of $\psi$. Thus, the ARP (and specifically its local minima) predicts
locations of the particle clusters.

In general, given a solution $p(\vx)$ to the Helmholtz equation in dimension $d$, we can define a quadratic potential of the form
\begin{equation}
 \psi(\vx) = 
 \begin{bmatrix}
 p(\vx) \\ \nabla p(\vx)
 \end{bmatrix}^*
 \mA
 \begin{bmatrix}
 p(\vx) \\
 \nabla p(\vx)
 \end{bmatrix},
\label{eq:general-arp}
\end{equation}
where $\mA$ is a $(d+1)\times(d+1)$ Hermitian matrix. In particular, the physical ARP in dimensions $d=2$ and $3$ comes from using \eqref{eq:general-arp} and choosing
\begin{equation*}
 \mA = \begin{bmatrix}
 \mathfrak{a} & \\
 & -\mathfrak{b}\mI_d
 \end{bmatrix},
\end{equation*}
where $\mI_d$ is the $d\times d$ identity matrix and $\mathfrak{a}, \mathfrak{b}$ are non-dimensional constants that depend on the densities and compressibilities of the fluid and particles. Thus when $d=2$ or 3, we get the expression for the ARP used in, e.g., \cite{Bruus:2012:ARF,Settnes:2012:FAS,Gorkov:1962:FAS}: 
\begin{equation}
 \psi(\vx) = \mathfrak{a}|p(\vx)|^2 - \mathfrak{b}\|\nabla p(\vx)\|^2,
 \label{eq:ARP1}
\end{equation}
where the norm is the Euclidean norm. We now consider solutions to the Helmholtz equation in dimension $d$ that are 
finite linear combinations of plane waves, i.e.
\begin{equation}
  p(\vx;\vu) = \sum_{j=1}^N \alpha_j \exp[i \vk_j \cdot \vx ] + \beta_j
  \exp[-i \vk_j \cdot \vx], \quad \vx\in\real^d,
 \label{eq:planewaves}
\end{equation} where the wavevectors $\vk_j \in \real^d$ have the same length $k$ and $\vu=[\alpha_1, \ldots, \alpha_N$, $\beta_1, \ldots,
\beta_N]^T$ is a vector 
containing $2N$ complex amplitudes. When $d=2$ or $3$,
these fields can be obtained experimentally with $N$ parallel pairs of
transducers (illustrated in \cref{fig:transducers} for the case $d=2$ and
$N=3$). In that case, the complex amplitudes $\alpha_j$ and $\beta_j$ describe the
amplitudes and phases of the voltages driving each transducer.\footnote{This is
a simplification, since it ignores any electric and acoustic impedances. A more
accurate modeling of the transducer controls is outside of the scope of this
work.} If 
the number of pairs of transducers coincides with the space dimension 
($N=d$),  the field \eqref{eq:planewaves} is periodic, and thus the particles
form periodic patterns within a particular subset of the achievable
crystallographic symmetries in dimension $d$ \cite{Guevara:2019:PPA}. The case $N>d$ 
has been shown theoretically and experimentally to
produce quasiperiodic arrangements \cite{Cherkaev:2021:WDA}. This is done by 
noticing that the ARP in the dimension $d$ is the restriction to
dimension $d$ of a periodic ARP-like potential quantity in dimension $N$. 
This is similar to creating quasiperiodic patterns 
by projecting a structure periodic in higher-dimensional space onto a lower-dimensional subspace in the cut-and-project method. 
Here we exploit
this higher-dimensional point of view to devise explicit ways in which the
quasiperiodic patterns created by the nanoparticles in a fluid can be controlled 
and tuned 
through, e.g., translations, rotations, and reflections.

\begin{figure}
\begin{center}
 \includegraphics[width=0.3\textwidth]{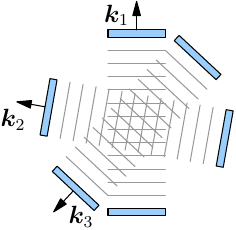}
\end{center}
 \caption{Physical setup to obtain standing plane waves of the form
 \eqref{eq:planewaves}. Here, we give an example in two dimensions with 3
 parallel pairs of linear transducers (in blue). The region where the fields in this
 setup agree with \eqref{eq:planewaves} depends on the wavelength and
 the distance to the transducers.}
 \label{fig:transducers}
\end{figure}

\subsection{Contents}
The presentation in the paper is arranged as follows. We start in \cref{sec:quasiperiodic} by describing a  quasiperiodic acoustic radiation potential (ARP). We use a method similar to the cut-and-project method
\cite{deBruijn:1981:ATP}, relying on a higher-dimensional ARP-like potential. 
In \cref{lem:condition}, we 
give a novel definition of quasiperiodicity (initially proposed in \cite{Cherkaev:2021:WDA}) that includes
necessary and sufficient conditions ensuring that the wavefield is quasiperiodic.
We show quasiperiodicity of the ARP generated by a superposition of 
$N$ plane waves in the $d$-dimensional space with $N>d$, 
and demonstrate 
examples of the associated aperiodic tilings of the plane for some
well-known rotational symmetries. 
Then in \cref{sec:derivatives}, we derive
explicit formulas for the gradient and Hessian of the ARP-like quantity. These
formulas allow us to independently confirm results in \cite{Guevara:2019:PPA}
about the level sets and minima of the ARP. However, these minima do
not necessarily reflect the positions of the particle clusters. To find the
actual positions of the particles, we calculate the gradient and Hessian of the ARP-like
quantity restricted to a lower-dimensional space (\cref{sec:in-subspace}). We
also demonstrate how to 
continuously transform the structures to 
translate, rotate, and reflect the
created particle patterns. In \cref{sec:constrained}, we formulate the
problem of finding the minima of the ARP in $d-$dimensions as a constrained optimization problem in $N-$dimensions. This allows us to relate certain
infinitesimally small phase changes in the transducers to changes in the ARP.
A strategy for smoothly transitioning between two ARP patterns while keeping a
constant power is given in \cref{sec:power}. In \cref{sec:num} we present an extension of   
the theory to 3D and to constructing time dependent structures and illustrate 3D ARP arrangements.
We conclude with a summary and future work in \cref{sec:summary}.

\section{Quasiperiodicity of the ARP}
\label{sec:quasiperiodic}

First, we recall the result in \cite{Cherkaev:2021:WDA} that shows that when
using $N>d$ parallel pairs of transducers in $d$ dimensions, the resulting
superposition of plane waves and its associated ARP can be quasiperiodic. The
main idea is to show that these $d-$dimensional quantities are the restriction
to a $d-$dimensional subspace of an $N-$dimensional \textit{periodic} function. 
We recall that a
function $f$ defined in $\real^d$  is \textit{quasiperiodic} if it is not
periodic and yet we can find a periodic function $g$ in $\real^N$ with $N>d$ and
a matrix $\mK \in \real^{d \times N}$ such that $f(\vx) = g(\mK^T \vx)$ (see e.g. \cite{Yamamoto:1996:CQC}).
The
$N-$dimensional representation allows us to relate the constructed particle patterns to quasiperiodic tilings in $d$ dimensions (e.g., Penrose tilings
\cite{Gardner:1977:ENT,Penrose:1974:ROP,Penrose:1979:PCN}). 
 
\subsection{Quasiperiodicity of a superposition of plane waves}
\label{sec:pqp}
We observe that a superposition of plane waves of the form
\eqref{eq:planewaves} can always be expressed as the restriction to a
$d-$dimensional subspace of a particular periodic superposition of plane waves in
$N$ dimensions. To be more precise, let $p_d(\vx;\vu)$ be a superposition of $N$ plane waves which is defined for $\vx \in \real^d$ as in \eqref{eq:planewaves} with wave vectors $\vk_1,
\dots, \vk_N \in \real^d$ and $N\geq d$. Similarly, let $p_N(\vy;\vu)$ be the superposition of $N$ plane waves defined for $\vy \in \real^N$ by
\begin{equation}
 p_N(\vy;\vu) = \sum_{j=1}^N \alpha_j \exp[i\ve_j \cdot \vy] + \beta_j \exp[-i\ve_j \cdot \vy],
 \label{eq:pn}
\end{equation}
where the wavevectors are now $N-$dimensional and given by the canonical basis vectors $\ve_1,\ldots,\ve_N$ of $\real^N$. If we define the matrix
\begin{equation}
    \mK = [\vk_1,\ldots,\vk_N] \in \real^{d \times N},
\end{equation}
we can see that
\begin{equation}
 p_d(\vx;\vu) = p_N(\mK^T\vx;\vu).
 \label{eq:rest}
\end{equation}
Indeed, the equality \eqref{eq:rest} comes from realizing that $\ve_j \cdot (\mK^T \vx) = \vk_j \cdot \vx$, $j=1,\ldots,N$. The equality \eqref{eq:rest} means that $p_d(\vx;\vu)$ is the restriction of the $N-$dimensional superposition of plane waves $p_N(\vy;\vu)$ to the affine subspace $\{\vy = \mK^T\vx ~|~ \vx \in \real^d\}$.

By the same observation, the restriction of \eqref{eq:pn} to the affine subspace
$\{ \vy = \mK^T \vx+\vgamma ~|~ \vx \in \real^d \}$, where $\vgamma \in \real^N$ is fixed, is a superposition of plane waves in $\real^d$, namely
\begin{equation}
 p_N(\mK^T\vx + \vgamma;\vu) = p_N(\mK^T\vx;\exp[i\mD(\vgamma)]\vu)  = p_d(\vx;\exp[i\mD(\vgamma)]\vu),
 \label{eq:shifts}
\end{equation} 
where $\mD(\vgamma) = \diag(\vgamma,-\vgamma)$ and $\diag([a_1,\dots,a_n])$ is the matrix with diagonal elements $a_1,\dots,a_n$. Therefore, shifting the affine subspace by $\vgamma$ is equivalent to applying a phase shift to the transducer parameters $\vu$. For simplicity, we choose $\vgamma = \vzero$, whenever possible. An application of \eqref{eq:shifts} is shown in \cref{sec:qp:ca} to achieve spatial translations of the ARP.

Clearly $p_N(\vy;\vu)$ in \eqref{eq:pn} is a periodic function in $\real^N$, with period being the hypercube $[0,2\pi]^N$.  Since by \eqref{eq:rest} the superposition $p_d(\vx;\vu)$ is the restriction of a periodic function in dimension $N$, we can give a condition on $\mK$ that guarantees quasiperiodicity of $p_d(\vx;\vu)$.  
It is sufficient to ensure that $p_d(\vx;\vu)$ is periodic along at most $d-1$ linearly independent directions, i.e., there is at least one direction along which $p_d(\vx;\vu)$ is not periodic. 
The next lemma provides a necessary and sufficient condition on $\mK$, ensuring $p_d(\vx;\vu)$ is quasiperiodic, which was first introduced in \cite{Cherkaev:2021:WDA}.

\begin{lemma}
\label{lem:condition}
Assume $\rank(\mK)=d$ and that $p_N(\vy;\vu)$ is periodic with primitive cell $[0,2\pi]^N$. The function $p_d(\vx;\vu) = p_N(\mK^T\vx;\vu)$ is quasiperiodic if and only if
$$\dim(\range(\mK^T) \cap (2\pi)\znat^N) \le d-1,$$
where the dimension of a set is defined as the dimension of its linear span.
\end{lemma}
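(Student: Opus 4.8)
The plan is to reduce the statement to a computation of the dimension of the period lattice of $p_d$. Write $L = \{\vt \in \real^d : p_d(\vx+\vt;\vu) = p_d(\vx;\vu) \text{ for all } \vx\}$ for the group of periods of $p_d$. Since $p_d(\vx;\vu) = p_N(\mK^T\vx;\vu)$ is by construction the restriction of the $N$-dimensional periodic function $p_N$, the only thing that can fail is full periodicity: $p_d$ is periodic exactly when $L$ contains $d$ linearly independent vectors, i.e. when $\dim\linspan(L) = d$, and otherwise it is quasiperiodic (provided $N>d$, which I verify at the end). So the whole statement follows once I show $\dim\linspan(L) = \dim(\range(\mK^T)\cap(2\pi)\znat^N)$: this dimension is at most $d$, so it equals $d$ precisely when $p_d$ is periodic and is $\le d-1$ precisely when $p_d$ is quasiperiodic.

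The key step is the identification $L = \{\vt : \mK^T\vt \in (2\pi)\znat^N\}$. The inclusion $\supseteq$ is immediate: if $\mK^T\vt \in (2\pi)\znat^N$ then, by \eqref{eq:rest} and the periodicity of $p_N$ with period lattice $(2\pi)\znat^N$, we get $p_d(\vx+\vt;\vu) = p_N(\mK^T\vx + \mK^T\vt;\vu) = p_N(\mK^T\vx;\vu) = p_d(\vx;\vu)$, so $\vt \in L$. For the reverse inclusion I would use \eqref{eq:shifts} with $\vgamma = \mK^T\vt$: a period $\vt \in L$ satisfies $p_d(\vx;\exp[i\mD(\mK^T\vt)]\vu) = p_N(\mK^T\vx+\mK^T\vt;\vu) = p_d(\vx;\vu)$ for all $\vx$, i.e. the phase-shifted amplitudes produce the same field. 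Matching the coefficients of the plane waves $e^{\pm i\vk_j\cdot\vx}$ then forces $e^{i(\mK^T\vt)_j}=1$ for every index $j$ whose corresponding amplitude is nonzero; since $p_N$ has primitive cell $[0,2\pi]^N$, for each $j$ at least one of $\alpha_j,\beta_j$ is nonzero, so $(\mK^T\vt)_j \in 2\pi\znat$ for all $j$ and hence $\mK^T\vt \in (2\pi)\znat^N$.

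With this identification in hand the dimension count is routine linear algebra: because $\rank(\mK)=d$, the map $\mK^T:\real^d\to\real^N$ is injective, so it carries $L$ bijectively and linearly onto $\range(\mK^T)\cap(2\pi)\znat^N$, and an injective linear map preserves the dimension of linear spans. This yields $\dim\linspan(L) = \dim(\range(\mK^T)\cap(2\pi)\znat^N)$, and feeding it into the dichotomy of the first paragraph gives the equivalence. It remains only to note that when the intersection has dimension $\le d-1$ we cannot have $N=d$ (for $N=d$ and $\rank(\mK)=d$ the range is all of $\real^N$ and the intersection is the full lattice $(2\pi)\znat^N$, of dimension $d$), so $N>d$ and $p_d$ is genuinely quasiperiodic.

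I expect the reverse inclusion $L \subseteq \{\vt : \mK^T\vt\in(2\pi)\znat^N\}$ to be the only delicate point. The subtlety is that a period $\vt$ of $p_d$ guarantees invariance of $p_N$ only along the $d$-dimensional subspace $\range(\mK^T)$, not on all of $\real^N$, so one cannot simply assert that $\mK^T\vt$ is a period of $p_N$; the coefficient-matching step is what repairs this. That step silently requires the $2N$ exponentials $e^{\pm i\vk_j\cdot\vx}$ to be linearly independent as functions on $\real^d$, equivalently that the wavevectors $\vk_1,\dots,\vk_N$ be distinct (and none the negative of another), since otherwise coincident frequencies could cancel and create spurious periods invisible to the lattice condition. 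In the physical setting the $\vk_j$ are distinct directions of a common length $k$, so this non-degeneracy holds, and I would state it as a standing hypothesis to keep the coefficient comparison rigorous.
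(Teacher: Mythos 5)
Your proposal is correct and follows the same overall reduction as the paper --- both arguments hinge on the equivalence ``$p_d$ periodic $\iff \dim(\range(\mK^T)\cap(2\pi)\znat^N)=d$,'' and your easy inclusion ($\mK^T\vt\in(2\pi)\znat^N$ implies $\vt$ is a period) is literally the paper's first direction. Where you genuinely differ is in the hard direction. The paper argues by contradiction: given a period $\va_{j'}$ of $p_d$ with $\vb_{j'}=\mK^T\va_{j'}\notin(2\pi)\znat^N$, it derives $p_N(\mK^T\vx+n\vb_{j'};\vu)=p_N(\mK^T\vx;\vu)$ and declares this incompatible with $p_N$ having primitive cell $[0,2\pi]^N$. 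As you correctly observe, that equality holds only on the subspace $\range(\mK^T)$, whereas the primitive-cell hypothesis constrains periods of $p_N$ on all of $\real^N$, so the paper's contradiction step is glossed; your coefficient-matching argument --- expanding $p_d(\vx;\exp[i\mD(\mK^T\vt)]\vu)=p_d(\vx;\vu)$ in the characters $e^{\pm i\vk_j\cdot\vx}$ and using that primitivity of the cell forces $(\alpha_j,\beta_j)\neq(0,0)$ for every $j$ --- is precisely what makes it rigorous, and it buys you more: the full identification $L=(\mK^T)^{-1}((2\pi)\znat^N)$ of the period group, not just the dichotomy. Your flagged non-degeneracy hypothesis (the $\vk_j$ pairwise distinct and no $\vk_i=-\vk_j$) is not pedantry: with repeated columns and cancelling amplitudes (e.g., $\vk_1=\vk_2$, $\alpha_1=-\alpha_2$, $\beta_1=\beta_2=0$) the frequency drops out of $p_d$ and $L$ is strictly larger than $(\mK^T)^{-1}((2\pi)\znat^N)$, so both your proof and, implicitly, the paper's depend on it, and the paper never states it. Finally, your closing observation that $N=d$ with $\rank(\mK)=d$ forces the intersection to have dimension $d$ cleanly discharges the $N>d$ requirement in the definition of quasiperiodicity, a detail the paper leaves implicit.
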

\begin{proof}
We will prove both directions by contraposition, i.e., we prove that $p_d(\vx;\vu)$ is periodic if and only if $\dim(\range(\mK^T) \cap (2\pi)\znat^N)=d$.

First, assume that $\dim(\range(\mK^T) \cap (2\pi)\znat^N) = d$. Then we can find a set of $d$ linearly independent vectors $\{\vb_1, \ldots, \vb_d\} \subset \range(\mK^T) \cap (2\pi)\znat^N$. Because $\rank(\mK) =d$, there are $d$ linearly independent vectors 
$\{\va_1,\ldots,\va_d\} \subset \real^d$ such that $\vb_j = \mK^T\va_j$, $j=1,\ldots, d$. Then for all $n_1, \ldots, n_d \in \znat$,
\begin{align*}
p_d(\vx + n_1\va_1 + \cdots + n_d\va_d;\vu) &= p_N(\mK^T\vx + n_1\mK^T\va_1 + \cdots + n_d\mK^T\va_d;\vu) \\
&= p_N(\mK^T\vx + n_1\vb_1 + \cdots + n_d\vb_d;\vu) \\
&= p_N(\mK^T\vx;\vu) \\
&= p_d(\vx;\vu),
\end{align*}
where we have used the fact that $p_N(\vy;\vu)$ is periodic on the $(2\pi)\znat^N$ lattice with primitive cell $[0,2\pi]^N$. Therefore $p_d(\vx;\vu)$ is periodic.

To prove the other direction, assume $p_d(\vx;\vu)$ is periodic. Then there exists a set of $d$ linearly independent vectors $\{\va_1,\ldots,\va_d\}\subset \real^d$ such that for all $n_1,\ldots, n_d\in \znat$,
$$p_d(\vx;\vu) = p_d(\vx + n_1\va_1 + \cdots + n_d\va_d;\vu).$$
Define the vectors $\vb_j = \mK^T\va_j$, $j = 1,\ldots,d$. The set of vectors $\{\vb_1,\ldots,\vb_d\}$ is linearly independent because $\mK^T$ has full column rank. For the sake of contradiction, assume there exists a $j' \in \{1,\ldots,d\}$ such that $\vb_{j'} \not\in (2\pi)\znat^N$. Using our assumption of periodicity of $p_d(\vx;\vu)$, we have that for any $n\in\znat$,
\begin{align*}
p_N(\mK^T\vx+n\vb_{j'};\vu) &= p_N(\mK^T\vx + n\mK^T\va_{j'};\vu) \\
&= p_d(\vx+n\va_{j'};\vu) \\
&= p_d(\vx;\vu) \\
&= p_N(\mK^T\vx;\vu).
\end{align*}
Because $\vb_{j'} \not \in (2\pi)\znat^N$, this contradicts the fact that $p_N(\vy;\vu)$ is periodic with primitive cell $[0,2\pi]^N$. Therefore $\dim(\range(\mK^T)\cap(2\pi)\znat^N) = d$.
\end{proof}

In the case $N=2$ and $d=1$ (illustrated in \cref{fig:1D}), the condition of \cref{lem:condition} can be interpreted as follows. Consider the restriction to a line passing through the origin of a function on $\real^2$ that is periodic with unit cell $[0,2\pi]^2$. The restriction is quasiperiodic if and only if the slope of the line is irrational. An example of this construction is illustrated in \cref{fig:tiling-1D}.

\subsection{Quasiperiodicity of the ARP for a superposition of plane waves}
The quadratic potential of the form \eqref{eq:general-arp}
associated with the superposition of plane waves can be written as \cite{Guevara:2019:PPA,Cherkaev:2021:WDA}
\begin{equation}
\psi(\vx;\vu) = \vu^*\mQ(\vx)\vu,
\label{eq:ARP2}
\end{equation}
where 
\begin{equation}
\mQ(\vx) = \mM(\vx)^*\mA\mM(\vx),
\label{eq:Qmat}
\end{equation}
and the $(N+1) \times 2N$ matrix $\mM(\vx)$ is given by
\begin{equation*}
\mM(\vx) = [\mM_+(\vx) \quad \mM_-(\vx)]
\end{equation*}
and
\begin{equation*}
\mM_{\pm}(\vx) =
\begin{bmatrix}
\exp[\pm i\vx^T\mK] \\
\pm i \mK\diag(\exp[\pm i\vx^T\mK])
\end{bmatrix}.
\end{equation*}

Recall that we use the notation $p_N(\vy;\vu)$, $\vy\in\real^N$ and $p_d(\vx;\vu)$, $\vx\in \real^d$ when referring to pressure fields in $N$ dimensions and to pressure fields that have been restricted to affine subspaces $\{\vy = \mK^T\vx ~|~ \vx \in \real^d\}$, and that the two pressure fields are related by $p_d(\vx;\vu) = p_N(\mK^T\vx;\vu)$. We will use a similar notation for ARPs constructed from the fields $p_N(\vy;\vu)$ in $N$ dimensions and ARPs that have been restricted to affine subspaces. Let 
\begin{equation}
 \psi_N(\vy;\vu) = 
 \begin{bmatrix}
 p_N(\vy;\vu) \\ \nabla p_N(\vy;\vu)
 \end{bmatrix}^*
 \mA_N
 \begin{bmatrix}
 p_N(\vy;\vu) \\
 \nabla p_N(\vy;\vu)
 \end{bmatrix}
\label{eq:ARP-N}
\end{equation}

and

\begin{equation}
 \psi_d(\vx;\vu) = 
 \begin{bmatrix}
 p_d(\vx;\vu) \\ \nabla p_d(\vx;\vu)
 \end{bmatrix}^*
 \mA_d
 \begin{bmatrix}
 p_d(\vx;\vu) \\
 \nabla p_d(\vx;\vu)
 \end{bmatrix},
\label{eq:ARP-d}
\end{equation}

where 
\[
\mA_d = \begin{bmatrix}
\mathfrak{a} & \\
& -\mathfrak{b}\mI_d
\end{bmatrix} 
~\text{and}~
\mA_N = \begin{bmatrix}
\mathfrak{a}  & \\
& -\mathfrak{b}\mK^T\mK
\end{bmatrix}.
\]

By taking the gradient of \eqref{eq:rest} and using the chain rule we notice that
\[
 \nabla p_d(\vx;\vu) = \mK \nabla p_N(\mK^T \vx;\vu).
\]
Using this expression in \eqref{eq:ARP-d} we observe that \eqref{eq:ARP-d} and \eqref{eq:ARP-N} are related by \cite{Cherkaev:2021:WDA}
\[ 
 \psi_d(\vx;\vu) = \psi_N(\mK^T\vx;\vu). 
\]
Thus $\psi_d(\vx;\vu)$ is quasiperiodic provided the condition in \cref{lem:condition} for quasiperiodicity of the pressure field $p_d(\vx;\vu)$ is satisfied.
Examples of quasiperiodic ARPs constructed with $N=4$, $N=5$, and $N=6$ transducer directions are given in \cref{ex:arp}.
\begin{example}
\label{ex:arp}
We compute three examples of quasiperiodic ARPs according to \eqref{eq:ARP1}, where the pressure field is computed using \eqref{eq:pn} and \eqref{eq:rest}. We fix $d=2$ to simplify visualization and construct examples with $N=4, 5$, and $6$. We choose the 2D wavevectors to be $\pi/N$ apart in angle ($\vk_j = [\cos((j-1)\pi/N), \sin((j-1)\pi/N)]^T, j=1, \ldots, N$) and choose the vector of transducer parameters $\vu$ such that $\alpha_j = 1$ and $\beta_j = -1$ for $j=1, \ldots, N$. We also fix the ARP parameters $\mathfrak{a} = \mathfrak{b} = 1$. The computed ARPs are shown in \cref{fig:arp}. Note that depending on the transducer parameters $\vu$, quasiperiodic ARPs with $2N-$fold rotational symmetry can be achieved \cite{Cherkaev:2021:WDA}.

\begin{figure}
	\begin{tabular}{ccc}
	\includegraphics[width=0.30\textwidth]{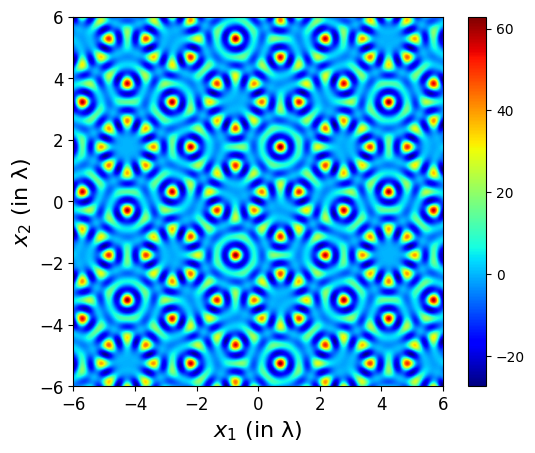} &
	\includegraphics[width=0.30\textwidth]{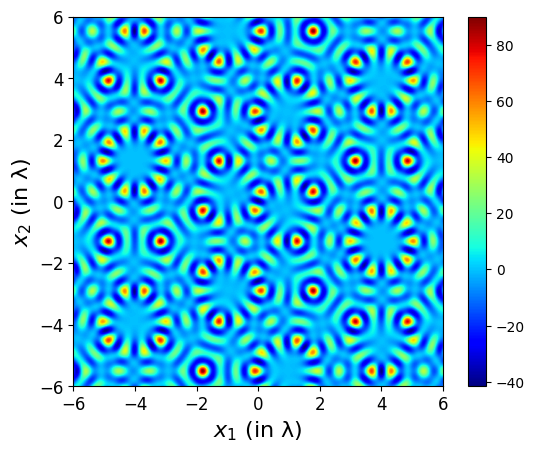} &
	\includegraphics[width=0.30\textwidth]{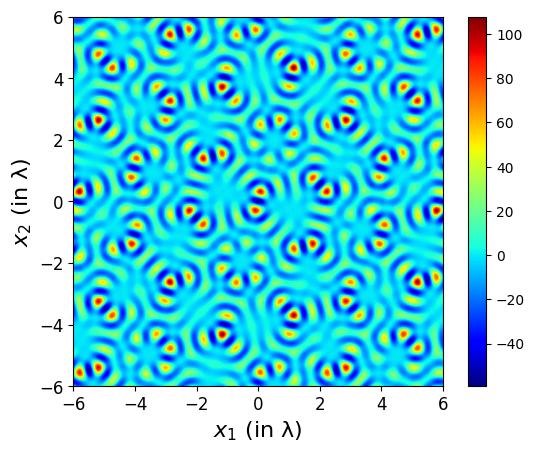} \\
	(a) & (b) & (c)
	\end{tabular}
\caption{Examples of ARPs constructed with (a) $N=4$, (b) $N=5$, and (c) $N=6$ transducer directions. The wavevectors are $\vk_j = [\cos((j-1)\pi/N), \sin((j-1)\pi/N)]^T, j=1, \ldots, N$, the transducer parameters are $\alpha_j=1, \beta_j = -1$, $j=1,\ldots, N$, and the ARP parameters are $\mathfrak{a}=\mathfrak{b}=1$.}
\label{fig:arp}
\end{figure}

\end{example}

\begin{example}
\label{ex:moire}
A linear combination of plane waves of the form \eqref{eq:planewaves} with wavevectors $\vk_1 = [\sqrt{3}/2,1/2]^T$,  $\vk_2 = [-\sqrt{3}/2,1/2]^T$ gives a periodic ARP with an hexagonal Bravais lattice with unit cell spanned by $\va_1 = (2\lambda/\sqrt{3}) [1/2,\sqrt{3}/2]$ and $\va_2 = (2\lambda/\sqrt{3}) [-1/2,\sqrt{3}/2]$ (see e.g. \cite{Kittel:2005:ISP,Guevara:2019:PPA}). If we add the plane waves corresponding to wavevectors $\vk_3=\mR_\theta \vk_1$ and $\vk_4=\mR_\theta \vk_2$, where $\mR_\theta$ is the counterclockwise rotation by an angle $\theta$, then we obtain periodic or quasiperiodic ARPs depending on the rotation angle $\theta$. The angles $\theta$ at which a hexagonal Bravais lattice and its rotation coincide have been characterized since this occurs in twisted bilayer graphene \cite{Shallcross:2008:QIT,LopesdosSantos:2012:CMT}.  The rotation angles $0<\theta<\pi/3$ that give rise to periodicity are given by 
\begin{equation}
 \cos \theta(m,r) = \frac{3m^2 + 3mr+r^2/2}{3m^2 + 3mr+r^2},
 \label{eq:cosine}
\end{equation}
where $m,r$ are non-zero coprime integers. When periodicity occurs, the superlattice is also a hexagonal Bravais lattice with a larger unit cell spanned by vectors $\vt_1$ and $\vt_2$ that can be given as an integer linear combination of either the original lattice vectors $\va_1,\va_2$ or their rotated versions $\mR_{\theta(m,r)} \va_1, \mR_{\theta(m,r)} \va_2$. To be more precise, there are invertible matrices $\mM, \mM' \in \znat^{2\times 2}$ that depend on $m,r$ such that
\[
 [\vt_1, \vt_2 ] = \mM [\va_1, \va_2] = \mM' \mR_{\theta(m,r)} [\va_1, \va_2].
\]
The actual expressions of $\mM$ and $\mM'$ are irrelevant to this example,  but they are included in \cite{LopesdosSantos:2012:CMT}. Periodicity can be verified with the condition of \cref{lem:condition} since:
\[
 \mK^T [\vt_1, \vt_2 ] = \begin{bmatrix}
 [\vk_1,\vk_2]^T\\
 [\vk_3,\vk_4]^T
 \end{bmatrix}
 [\vt_1, \vt_2 ] = 2\pi \begin{bmatrix} \mM\\ \mM' \end{bmatrix},
\]
we see that the $\range(\mK^T)$ contains $d=2$ linearly independent vectors $\mK^T \vt_1,\mK^T \vt_2\in (2\pi)\znat^4$. 

If $\theta$ is not one of the angles 
in \cref{eq:cosine}
then the two hexagonal Bravais lattices do not coincide aside from the origin, and we get a quasiperiodic ARP. Since the right hand side of \eqref{eq:cosine} is rational, any rotation angle with irrational cosine results in a quasiperiodic pattern. One example is $\theta = \pi/6$ as observed in \cite{Yao:2018:QTB}. We illustrate moir\'e patterns in \cref{fig:moire}. 
The case $\theta = \pi/6$ shown in the bottom-right image in \cref{fig:moire} is related to the ARP in \cref{ex:arp}  
with $N=6$ shown in \cref{fig:arp}(c), but without two of the wavenumbers in the sum (\ref{eq:pn}). 

\begin{figure}
 \begin{tabular}{ccc}
	$m = 1, r = 1$ & $m=2, r =1$ & $m=3, r=1$\\
	\includegraphics[width=0.30\textwidth]{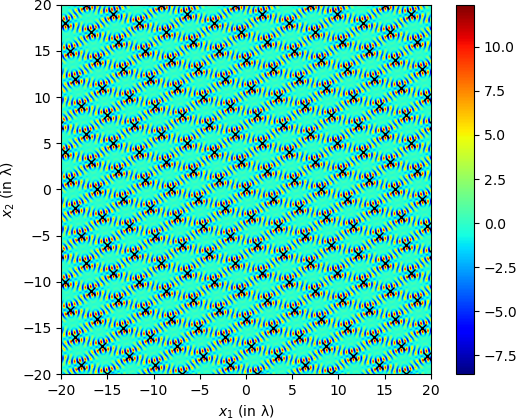} &
	\includegraphics[width=0.30\textwidth]{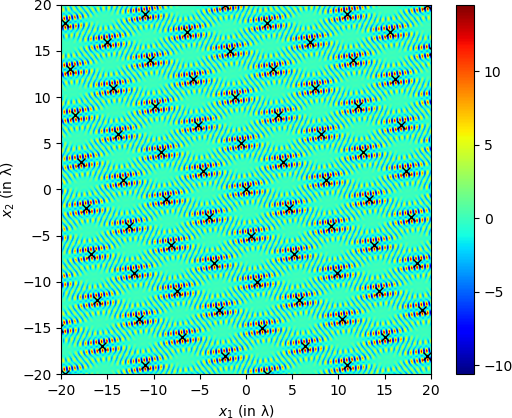} &
	\includegraphics[width=0.30\textwidth]{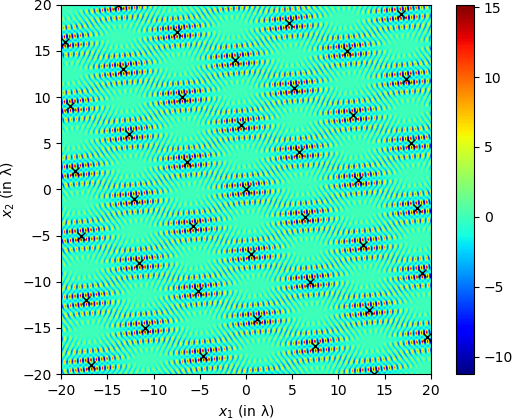}\\
	\includegraphics[width=0.30\textwidth]{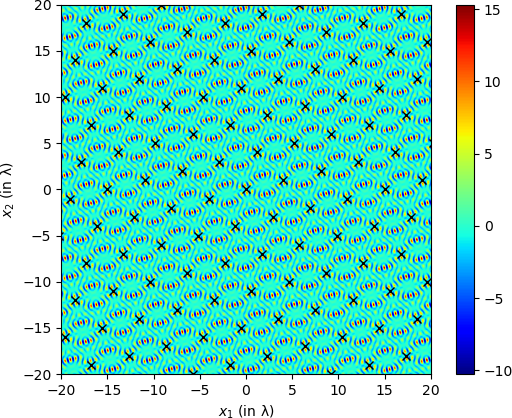} &
	\includegraphics[width=0.30\textwidth]{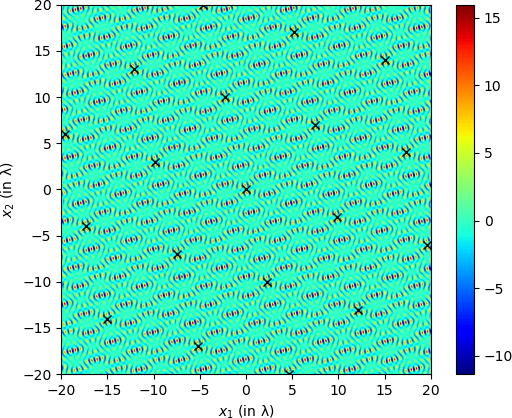} &
	\includegraphics[width=0.30\textwidth]{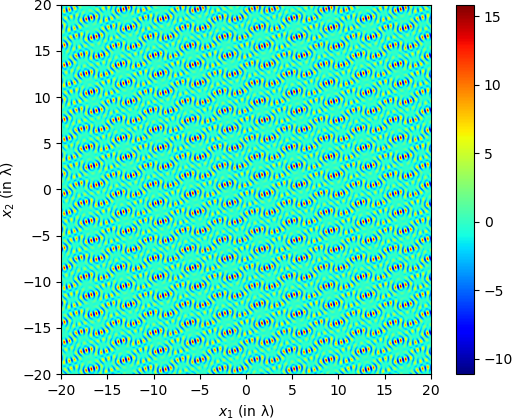}\\
	$m=2, r=3$ & $m=3, r=4$ & $\theta = \pi/6$
 \end{tabular}
 \caption{Examples of moir\'e patterns obtained as in \cref{ex:moire} by superposing two sets of plane waves with ARPs having hexagonal Bravais lattice rotated by angle $\theta(m,r)$. The superlattice points appear as $\times$. Here the transducer parameters are $\alpha_j=1, \beta_j = -1$, $j=1,\ldots, 4$. The bottom-right image corresponds to an angle $\theta=\pi/6$ which gives a quasiperiodic ARP.}
 \label{fig:moire}
\end{figure}

\end{example}

\subsection{Quasiperiodic tilings}
\label{sec:tilings}

An ARP for a quasiperiodic standing wave in dimension $d$ is associated with a quasiperiodic tiling in dimension $d$. Here, we show the tilings in $d=2$ for each of the ARPs we constructed in \cref{ex:arp}. To construct the tilings, we follow the projection method \cite{Austin:2005:PTT,Yamamoto:1996:CQC,deBruijn:1981:ATP}. The main idea is that the nodes of the tiling are orthogonal projections of the $N-$dimensional lattice points with associated Voronoi cells intersecting the plane. The edges in the tiling indicate whether the corresponding Voronoi cells are adjacent in higher dimensions.
An example showing this tiling method with $N=2$ and $d=1$ appears in \cref{fig:tiling-1D}. In 2D, the procedure to obtain tilings is as follows and can be easily adapted to e.g., 3D tilings. We emphasize that these quasiperiodic tilings are not the intersections of the Voronoi cells with the affine subspace we work on. In the example of \cref{fig:tiling-1D}, the intersections are line segments with variable length, whereas the tiling is composed of segments with two distinct lengths. Because the tilings are not intersections, it is not clear whether the values of the ARP are directly related to the tiling. In particular, it seems that the number of ARP extrema is variable within each tile.

\begin{algorithm}
\begin{enumerate}
\item[Step 1] Generate a list $L \subset \real^2$ of points in a given region of interest.
\item[Step 2] Generate a list $V$, with no repetitions, of the $N-$dimensional lattice points whose Voronoi cells contain the points in the list $L$. Since the Voronoi cells of a regular lattice in $N$ dimensions are $N-$dimensional cubes, this can be done by rounding.
\item[Step 3] Add to the list $V$ all the lattice points whose Voronoi cells are adjacent to the Voronoi cells corresponding to the points in $V$ and that intersect the affine subspace $\vy = \mK^T \vx + \vgamma$. This is done by solving \eqref{eq:linearprog}.
\item[Step 4] For each lattice point $\vv \in V$, find its orthogonal projection onto the affine subspace $\vy = \mK^T \vx + \vgamma$. This is done by solving a least squares problem with the normal equations: $\text{projection}(\vv)= (\mK\mK^T)^{-1}\mK(\vv - \vgamma)$.
\item[Step 5] Add an edge between two nodes if their corresponding Voronoi cells are adjacent.
\end{enumerate}
\caption{Computing the tiling of the plane corresponding to a quasiperiodic ARP.}
\label{algo:qptile}
\end{algorithm}

Finally, we explain how we determine when a Voronoi cell with center point $\vc = (c_1,\ldots,c_N) \in (2\pi)\znat^N$ intersects the affine subspace $\vy = \mK^T \vx + \vgamma$, which is used in \cref{algo:qptile}, Step 3. This is done by checking the feasibility of the linear programming problem
\begin{equation}
\min_{\vx \in \real^2}~C~\text{s.t.}~c_j - \pi \le \vk_j\cdot\vx + \gamma_j\le c_j + \pi,  j = 1,\ldots, N.
\label{eq:linearprog}
\end{equation}
Notice that the objective function is an arbitrary constant $C$, as we are attempting to solve a linear programming problem to check whether the feasible set determined by these linear constraints is non-empty. The Voronoi cell with center $\vc$ intersects the affine subspace $\vy= \mK^T \vx + \vgamma$ if and only if the feasible set in \eqref{eq:linearprog} is non-empty.

\begin{figure}
\begin{center}
	\includegraphics[width = 0.4\textwidth]{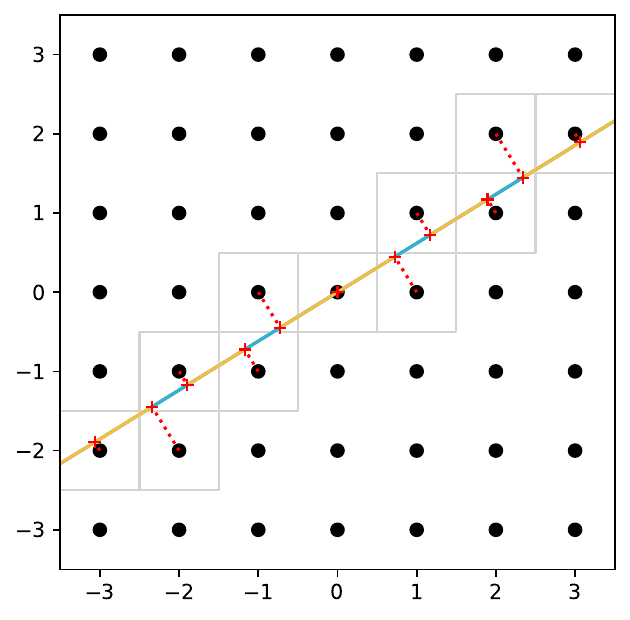}
\end{center}
	\caption{Example of tiling for $N=2$ and $d=1$. We start with the $\znat^2$ lattice (black dots) and the line $y = (1/\phi)x$, where $\phi = (1+\sqrt{5})/2$. The Voronoi cells of the lattice intersecting the line are highlighted in gray. The centers of these Voronoi cells are orthogonally projected onto the line (projections are shown in red). The projected points split the line into short (blue) and long (yellow) tiles. Because the line goes through the origin and has an irrational slope, it will never pass through another coordinate point; therefore, the pattern of short and long tiles is quasiperiodic -- if we continued the line tiling indefinitely, certain patterns would repeat (e.g., short-long-long), but the entire tiling would not have translational symmetry.}
	\label{fig:tiling-1D}
\end{figure}

\Cref{ex:tilings} shows eightfold, tenfold, and twelvefold tilings constructed using the method described in this section. To further illustrate the quasiperiodicity of the ARPs presented in \cref{ex:arp}, we show these tilings overlaid on top of the corresponding ARPs and notice that the tilings further emphasize the repeating, but non-periodic, structures present. 

\begin{example}
\label{ex:tilings}
\Cref{fig:tilings} shows the same ARPs shown in \cref{fig:arp}, but this time with their associated tilings overlaid. The tilings are obtained applying \cref{algo:qptile} with $N=4$, $5$ and $6$. The figures show that the tiles' values align with the ARP. 

The tiling rhombus angles can be determined by recognizing that the lines in an $N-$grid only intersect with angles that are multiples of $\pi/N$. This restriction means that the tiles for the eightfold symmetry case are of two kinds: squares and rhombuses with acute angle 45\textdegree. For the tenfold symmetry, the tiles are rhombuses with acute angles 36\textdegree~or 72\textdegree (these are known as Penrose tiles \cite{Penrose:1974:ROP,Penrose:1979:PCN,deBruijn:1981:ATP}). For the twelvefold symmetry case, the tiles are squares and two kinds of rhombuses with acute angles of 30\textdegree~and 60\textdegree.

\begin{figure}
\begin{center}
	\begin{tabular}{c@{}c@{}c}
	\includegraphics[width=0.3\textwidth]{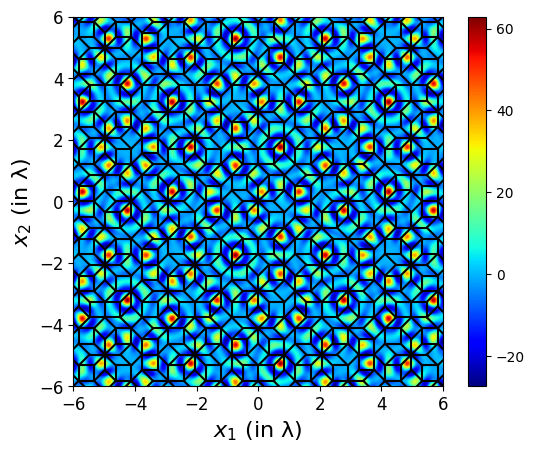} &
	\includegraphics[width=0.3\textwidth]{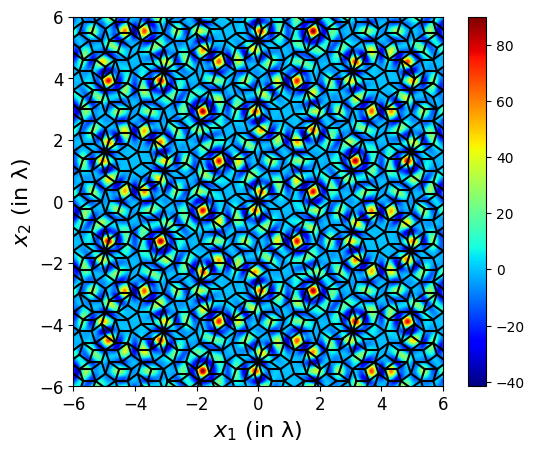} &
	\includegraphics[width=0.3\textwidth]{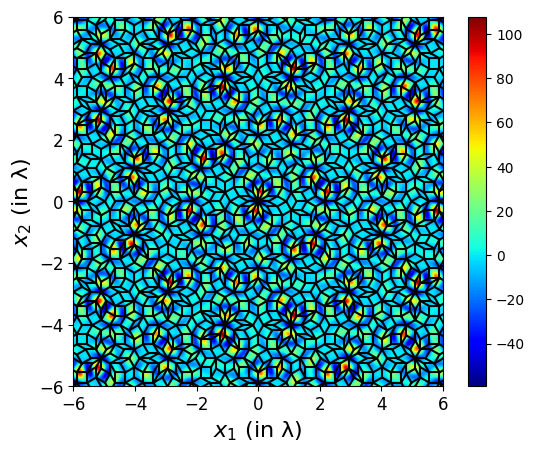} \\
	(a) & (b) & (c)
	\end{tabular}
\end{center}
\caption{Examples of quasiperiodic ARPs with associated quasiperiodic tilings superimposed. The ARPs are the same as in \cref{fig:arp}. (a) Eightfold tiling superimposed on ARP constructed with $N=4$ transducer directions. (b) Tenfold tiling superimposed on ARP constructed with $N=5$ transducer directions. (c) Twelvefold tiling superimposed on ARP constructed with $N=6$ transducer directions.}
\label{fig:tilings}
\end{figure}

\Cref{fig:tilings5} shows that changing the properties of the particles and fluids in the computation of the ARP does not change the underlying tiling. The ARP shown in \cref{fig:tilings5} (b) is the same ARP seen in \cref{fig:arp} (b) and \cref{fig:tilings} (b). Recall that this ARP was computed with parameters $\mathfrak{a}=\mathfrak{b}=1$. The ARPs in \cref{fig:tilings5} (a) and (c) were computed with parameters $\mathfrak{a}=1, \mathfrak{b}=0$ and $\mathfrak{a}=0, \mathfrak{b}=1$, respectively (without changing the wavevectors or transducer parameters). Although the ARPs look very different, the same tenfold tiling is visually correlated with each ARP.

\begin{figure}
\begin{center}
 \begin{tabular}{c@{}c@{}c}
  \includegraphics[width=0.295\textwidth]{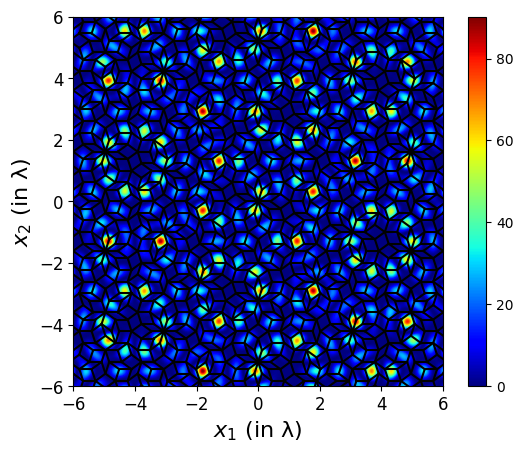} & 
  \includegraphics[width=0.3\textwidth]{tiling_10fold.png} &
  \includegraphics[width=0.3\textwidth]{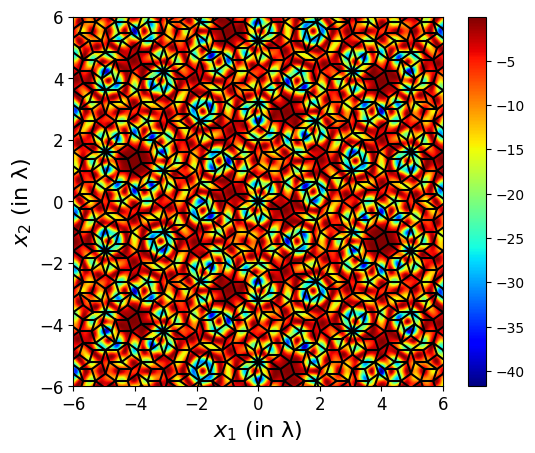}\\
  (a) & (b) & (c)
 \end{tabular}
\end{center}
 \caption{Examples of ARPs corresponding to different properties of the particles and the fluids that have exactly the same underlying tiling as in \cref{fig:tilings} (b). For all three ARPs, the wavevectors are $\vk_j = [\cos((j-1)\pi/N), \sin((j-1)\pi/N)]^T, j=1, \ldots, 5$, and the transducer parameters are $\alpha_j=1, \beta_j = -1$, $j=1,\ldots, 5$. The ARP parameters vary. (a) $\mathfrak{a}=1$ and $\mathfrak{b}=0$. (b) $\mathfrak{a}=\mathfrak{b}=1$. (c) $\mathfrak{a}=0$ and $\mathfrak{b}=1$.}
 \label{fig:tilings5}
\end{figure}

\end{example}

\section{Periodic acoustic radiation potentials}
\label{sec:derivatives}
Here we focus on periodic ARPs in dimension $N$, i.e., ARPs corresponding to
\eqref{eq:planewaves} where $\vk_j \in \real^N$ for $j=1,\ldots,N$. This is the
same setup that was considered in \cite{Guevara:2019:PPA}. Throughout this section, we omit the $d$ and $N$ subscripts on $p$ and $\psi$ since we are not considering any restrictions to lower-dimensional subspaces. Here, we provide explicit expressions for the spatial gradient and Hessian of the ARP (\cref{sec:gradhess}). This was not necessary in \cite{Guevara:2019:PPA} because we used a direct approach to characterize the minima corresponding to transducer parameters that are eigenvectors of the matrix $\mQ(\vzero)$ (see \eqref{eq:Qmat}). With the explicit expressions of the gradient and Hessian, we can verify that the spatial minima of the ARP in the particular case of \cite{Guevara:2019:PPA} satisfy the second-order necessary optimality conditions (\cref{sec:levelset}). We also identify in \cref{lem:indet} a periodic family of affine subspaces that must be contained in certain level-sets of the ARP, provided the transducer parameters are also eigenvectors of the matrix $\mQ(\vzero)$.

\subsection{Explicit formulas for spatial gradient and Hessian}
\label{sec:gradhess}

Here, we provide explicit expressions for the gradient and Hessian of an ARP of the form \eqref{eq:ARP2}.  The derivatives of ARPs restricted to affine subspaces are considered later in \cref{sec:in-subspace}.

\begin{theorem}
\label{thm:gradient}
The gradient and Hessian of $\psi(\vx_0;\vu)$ are given by
\begin{equation}
\nabla_{\vx}\psi(\vx_0;\vu) = 2[\mK,-\mK]\Im(\diag(\overline{\vu})\mQ(\vx_0)\vu)
\label{eq:grad}
\end{equation}
and
\begin{equation}
\nabla^2_{\vx} \psi(\vx_0;\vu) = 2[\mK,-\mK]\bigg[\diag(\overline{\vu})\mQ(\vx_0)\diag(\vu) 
-\Re[\diag(\overline{\vu})\diag(\mQ(\vx_0)\vu)]\bigg]
\begin{bmatrix}
\mK^T \\
-\mK^T
\label{eq:hess}
\end{bmatrix}.
\end{equation}
\end{theorem}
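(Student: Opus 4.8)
The plan is to push all of the spatial dependence of $\mQ(\vx)$ into a diagonal unitary factor, which turns $\psi$ into an explicit trigonometric polynomial that can be differentiated term by term. Write $\vq_1,\dots,\vq_{2N}$ for the columns of $[\mK,-\mK]$, so that $\vq_m=\vk_m$ for $m\le N$ and $\vq_m=-\vk_{m-N}$ for $m>N$. Inspecting the columns of $\mM_\pm$ shows that each column of $\mM(\vx)$ is the corresponding column of $\mM(\vzero)$ multiplied by the scalar phase $\exp[i\vq_m\cdot\vx]$, i.e.
\[
 \mM(\vx)=\mM(\vzero)\,\mZ(\vx),\quad \mM(\vzero)=\begin{bmatrix}\vone^T & \vone^T\\ i\mK & -i\mK\end{bmatrix},\quad \mZ(\vx)=\diag(\exp[i\vx^T[\mK,-\mK]]).
\]
Since $\mZ(\vx)$ is diagonal and unitary, \eqref{eq:Qmat} gives $\mQ(\vx)=\mZ(\vx)^*\mQ(\vzero)\mZ(\vx)$, so entrywise $\mQ_{mn}(\vx)=\mQ_{mn}(\vzero)\exp[i(\vq_n-\vq_m)\cdot\vx]$ and therefore
\[
 \psi(\vx;\vu)=\sum_{m,n=1}^{2N}\overline{u_m}\,u_n\,\mQ_{mn}(\vzero)\,\exp[i(\vq_n-\vq_m)\cdot\vx].
\]

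Next I would differentiate this finite sum directly. Abbreviate $R_{mn}=\overline{u_m}\,\mQ_{mn}(\vx_0)\,u_n$, the $(m,n)$ entry of $\diag(\overline\vu)\mQ(\vx_0)\diag(\vu)$, and $S_m=\sum_n R_{mn}$, the $m$th entry of $\diag(\overline\vu)\mQ(\vx_0)\vu$. One differentiation brings down a factor $i(\vq_n-\vq_m)$ in each term and a second brings down $-(\vq_n-\vq_m)(\vq_n-\vq_m)^T$, giving
\[
 \nabla_\vx\psi(\vx_0;\vu)=\sum_{m,n}R_{mn}\,i(\vq_n-\vq_m),\qquad \nabla^2_\vx\psi(\vx_0;\vu)=-\sum_{m,n}R_{mn}(\vq_n-\vq_m)(\vq_n-\vq_m)^T.
\]

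The core of the argument is then the reassembly of these double sums into the compact matrix products in \eqref{eq:grad} and \eqref{eq:hess}, and here the key fact is that $\mQ(\vx_0)$ is Hermitian (because $\mA$ is), so $R$ is Hermitian with column sums $\sum_m R_{mn}=\overline{S_n}$ and row sums $\sum_n R_{mn}=S_m$. For the gradient, splitting $\vq_n-\vq_m$ and relabeling the two index sums gives $\sum_m \vq_m\,(i\overline{S_m}-iS_m)=\sum_m\vq_m\,2\Im(S_m)$, which is precisely $2[\mK,-\mK]\Im(\diag(\overline\vu)\mQ(\vx_0)\vu)$, i.e. \eqref{eq:grad}. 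For the Hessian, expanding $(\vq_n-\vq_m)(\vq_n-\vq_m)^T$ separates into ``cross'' terms $\vq_m\vq_n^T$ and ``square'' terms $\vq_m\vq_m^T$; pairing $(m,n)$ with $(n,m)$ collapses the cross terms to $2[\mK,-\mK]\Re(R)[\mK,-\mK]^T$, while the square terms collapse through the row/column sums to $-2[\mK,-\mK]\diag(\Re(S_1),\dots,\Re(S_{2N}))[\mK,-\mK]^T$. Using $\diag(\Re(S_m))=\Re[\diag(\overline\vu)\diag(\mQ(\vx_0)\vu)]$ and combining the two contributions reproduces \eqref{eq:hess}; since the Hessian is a real symmetric matrix while $\diag(\overline\vu)\mQ(\vx_0)\diag(\vu)$ is only Hermitian, the bracketed matrix is understood via its real (symmetric) part, and $[\mK,-\mK]$ being real lets this real part commute with the outer multiplications.

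I expect the only genuine obstacle to be this combinatorial reassembly and the accompanying reality check, both of which are governed entirely by the Hermiticity of $\mQ(\vx_0)$: it is exactly the pairing of each term with its index-transpose conjugate that produces the imaginary part in the gradient and forces the Hessian to reduce to a real symmetric form. The differentiation itself is routine once the phase factorization $\mM(\vx)=\mM(\vzero)\mZ(\vx)$ is in hand.
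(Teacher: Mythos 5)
Your argument is correct, and it rests on the same structural fact as the paper's proof: your factorization $\mM(\vx)=\mM(\vzero)\mZ(\vx)$ with $\mZ(\vx)=\diag(\exp[i\vx^T[\mK,-\mK]])=\exp[i\mD(\mK^T\vx)]$ is exactly the translation identity $\mQ(\vx_0+\veps)=\exp[i\mD(\mK^T\veps)]^*\mQ(\vx_0)\exp[i\mD(\mK^T\veps)]$ that the paper starts from. The execution differs, though: the paper Taylor-expands the conjugating phase matrix to second order and matches terms against the Taylor expansion of $\psi$, whereas you differentiate the exact finite exponential sum $\psi(\vx;\vu)=\sum_{m,n}\overline{u_m}u_n\mQ_{mn}(\vzero)\exp[i(\vq_n-\vq_m)\cdot\vx]$ term by term and reassemble the double sums via the row/column-sum bookkeeping that Hermiticity of $R=\diag(\overline{\vu})\mQ(\vx_0)\diag(\vu)$ provides. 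Your index computations check out ($i(\overline{S_m}-S_m)=2\Im S_m$ for the gradient; cross terms giving $2[\mK,-\mK]\Re(R)[\mK,-\mK]^T$ and square terms $-2[\mK,-\mK]\diag(\Re S)[\mK,-\mK]^T$ for the Hessian). What your route buys is exactness (derivatives of all orders are available from the same sum) and a Hessian formula that is manifestly real symmetric.

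Your closing caveat is in fact more than a reality ``check'': your formula with $\Re(R)$ is the strictly correct one, and the paper's \eqref{eq:hess}, read literally, can differ from the true Hessian by the purely imaginary antisymmetric term $2i[\mK,-\mK]\Im(R)[\mK,-\mK]^T$. This term is invisible to real quadratic forms ($\veps^T\mathsf{H}_2\veps=0$ for real $\veps$ and antisymmetric $\mathsf{H}_2$), which is precisely why the paper's term-matching derivation cannot detect it: matching $\tfrac12\veps^T\nabla^2_\vx\psi\,\veps$ only determines the Hessian up to such terms. And the term is not always zero: take $N=d=2$, $\vk_1=\ve_1$, $\vk_2=\ve_2$, $\mathfrak{a}=1$, $\mathfrak{b}=0$, $\vu=[1,i,0,0]^T$, $\vx_0=\vzero$; then $\psi(\vx;\vu)=2-2\sin(x_2-x_1)$, whose Hessian at the origin vanishes, while the right-hand side of \eqref{eq:hess} evaluates to $\left[\begin{smallmatrix}0 & 2i\\ -2i & 0\end{smallmatrix}\right]$, whose real part is indeed zero. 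So the paper's formula should be read with an implicit real part (equivalently, symmetrization) applied to the bracketed matrix — exactly the reading you propose — and your derivation supplies that refinement for free.
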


\begin{proof}
First, we note that the translation result from equation 2.5 in \cite{Guevara:2019:PPA} holds for our general definition of $\mQ(\vx)$ involving any Hermitian matrix $\mA$, i.e., $\mQ(\vx_0+\veps)$ and $\mQ(\vx_0)$ are related by 
\begin{equation}
\mQ(\vx_0 + \veps) = \exp[i\mD(\mK^T\veps)]^*\mQ(\vx_0)\exp[i\mD(\mK^T\veps)].
\end{equation}

Using the Taylor series, we can write as $\veps \to 0$ the following expansion for the diagonal matrix:
\begin{equation}
\exp[i\mD(\mK^T\veps)] = \mI + i\mD(\mK^T\veps) - \dfrac{1}{2}[\mD(\mK^T\veps)]^2 + o(\|\veps\|^2).
\end{equation}

Substituting this expansion into the expression for $\mQ(\vx_0+\veps)$, we can obtain a second-order expansion of $\mQ(\vx)$ around $\vx=\vx_0$. Then recall the expression for the ARP is
\begin{equation}
\psi(\vx_0+\veps;\vu) = \vu^*\mQ(\vx_0+\veps)\vu,
\end{equation}
so we can use the expansion for $\mQ(\vx)$ to obtain a second-order expansion for $\psi(\vx;\vu)$ around $\vx = \vx_0$.
We then find expressions for $\nabla_{\vx}\psi(\vx_0;\vu)$ and $\nabla^2_{\vx}\psi(\vx_0;\vu)$ by equating terms from this expansion with the terms in the Taylor expansion of $\psi(\vx;\vu)$ around $\vx_0$:
\begin{equation}
\psi(\vx_0+\veps;\vu) = \psi(\vx_0;\vu) + \veps^T\nabla_{\vx}\psi(\vx_0;\vu) + \frac{1}{2}\veps^T\nabla^2_{\vx}\psi(\vx_0;\vu)\veps + o(\|\veps\|^2).
\end{equation}
Detailed computations can be found in \cref{app:grad}.
\end{proof}

\subsection{Characterization of spatial minima and level-sets}
\label{sec:levelset}
We show that we can force the ARP to have a minimum at $\vx_0$ by choosing the vector of transducer parameters $\vu$ to be an eigenvector of $\mQ(\vx_0)$ associated with the minimum eigenvalue.  Then, in \cref{lem:indet}, we give a periodic family of affine subspaces that must be included in the level-sets of the ARP, provided that we use an eigenvector of $\mQ(\vx_0)$ as transducer parameters. We start with two lemmas showing, in particular, that choosing the transducer parameters $\vu$ to be an eigenvector of $\mQ(\vx_0)$ corresponding to its smallest eigenvalue guarantees that $\vx_0$ satisfies the second-order necessary optimality conditions for a minimizer.

\begin{lemma} If $\vu$ is an eigenvector of $\mQ(\vx_0)$ associated with any eigenvalue $\lambda$, then $\nabla_{\vx}\psi(\vx_0;\vu) = \vzero$.
\label{lem:zero-gradient}
\end{lemma}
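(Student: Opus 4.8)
The plan is to read the conclusion off directly from the gradient formula \eqref{eq:grad} in \cref{thm:gradient}, the only additional ingredient being that $\mQ(\vx_0)$ is Hermitian. First I would recall from \eqref{eq:Qmat} that $\mQ(\vx_0) = \mM(\vx_0)^*\mA\mM(\vx_0)$ with $\mA$ Hermitian; hence $\mQ(\vx_0)$ is itself Hermitian, and so every eigenvalue $\lambda$ is real. Note that this uses only the structure of $\mQ(\vx_0)$, not any property of the particular eigenvector, which is why the statement is allowed to hold for \emph{any} eigenvalue.

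Next I would substitute the eigenvector relation $\mQ(\vx_0)\vu = \lambda\vu$ into the vector appearing inside the imaginary part in \eqref{eq:grad}:
\[
\diag(\overline{\vu})\mQ(\vx_0)\vu = \lambda\,\diag(\overline{\vu})\vu.
\]
The vector $\diag(\overline{\vu})\vu$ has $j$th entry $\overline{u_j}\,u_j = |u_j|^2$, which is real (indeed nonnegative); multiplying by the real scalar $\lambda$ keeps every entry real. Therefore the imaginary part of this vector vanishes entrywise, i.e.
\[
\Im\big(\diag(\overline{\vu})\mQ(\vx_0)\vu\big) = \vzero.
\]

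Finally, plugging this into \eqref{eq:grad} gives
\[
\nabla_{\vx}\psi(\vx_0;\vu) = 2[\mK,-\mK]\,\vzero = \vzero,
\]
which is the claim. I do not expect any genuine obstacle here: the entire content reduces to the realness of $\lambda$ (from the Hermitian structure of $\mQ(\vx_0)$) together with the elementary observation that $\diag(\overline{\vu})\vu$ is a real vector. The only point worth stating carefully is that $\lambda\in\real$ is forced by $\mQ(\vx_0)$ being Hermitian rather than by any normalization or special choice of $\vu$, so the gradient vanishes for an eigenvector associated with any eigenvalue, not merely the smallest one.
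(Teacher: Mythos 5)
Your proof is correct and follows the same route as the paper's: substitute $\mQ(\vx_0)\vu = \lambda\vu$ into the gradient formula \eqref{eq:grad} and observe that $\lambda\,\Im(\overline{\vu}\odot\vu) = \vzero$. If anything, you are slightly more careful than the paper, which pulls the real scalar $\lambda$ out of the imaginary part without explicitly noting that Hermiticity of $\mQ(\vx_0)$ forces $\lambda\in\real$.
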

\begin{proof}
Using the expression for $\nabla_{\vx}\psi(\vx_0;\vu)$ found in \cref{thm:gradient}, we have
\begin{align}
\nabla_{\vx} \psi(\vx_0;\vu) &= 2[\mK,-\mK]\Im(\diag(\overline{\vu}\lambda\vu) \\
&= 2[\mK,-\mK]\lambda\Im(\overline{\vu}\odot \vu) \\
&= \vzero.
\end{align}
\end{proof}

\begin{lemma}
If $\vu$ is an eigenvector of $\mQ(\vx_0)$ associated with the minimum eigenvalue $\lambda$, then $\nabla^2_{\vx}\psi(\vx_0;\vu)$ is positive semidefinite.
\label{lem:pos-semidef}
\end{lemma}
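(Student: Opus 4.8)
The plan is to start from the Hessian formula \eqref{eq:hess} supplied by \cref{thm:gradient} and to recognize that, under the eigenvector hypothesis, the bracketed $2N\times 2N$ matrix is a congruence of the positive semidefinite matrix $\mQ(\vx_0)-\lambda\mI$. The single structural fact the whole argument rests on is this: since $\mA$ is Hermitian, $\mQ(\vx_0)=\mM(\vx_0)^*\mA\mM(\vx_0)$ is Hermitian, so its eigenvalue $\lambda$ is real, and because $\lambda$ is assumed to be the \emph{smallest} eigenvalue, $\mQ(\vx_0)-\lambda\mI$ is Hermitian positive semidefinite.

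First I would simplify the diagonal correction term. Using $\mQ(\vx_0)\vu=\lambda\vu$ with $\lambda\in\real$, the $i$th diagonal entry of $\diag(\overline{\vu})\diag(\mQ(\vx_0)\vu)$ equals $\overline{u_i}(\lambda u_i)=\lambda|u_i|^2$, which is already real; hence
\[
\Re\big[\diag(\overline{\vu})\diag(\mQ(\vx_0)\vu)\big]=\lambda\,\diag(\overline{\vu})\diag(\vu).
\]
Substituting this into the bracket appearing in \eqref{eq:hess} and factoring the diagonal matrices out on either side gives
\[
\diag(\overline{\vu})\mQ(\vx_0)\diag(\vu)-\lambda\,\diag(\overline{\vu})\diag(\vu)
=\diag(\overline{\vu})\big(\mQ(\vx_0)-\lambda\mI\big)\diag(\vu),
\]
where I have used $\lambda\,\diag(\overline{\vu})\diag(\vu)=\diag(\overline{\vu})(\lambda\mI)\diag(\vu)$.

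Next I would absorb the outer factors $[\mK,-\mK]$ and its transpose. Writing $M:=\diag(\vu)\begin{bmatrix}\mK^T\\-\mK^T\end{bmatrix}$ and observing that $[\mK,-\mK]$ is real, so that $M^*=[\mK,-\mK]\diag(\overline{\vu})$, the Hessian collapses to the manifestly congruent form
\[
\nabla^2_{\vx}\psi(\vx_0;\vu)=2\,M^*\big(\mQ(\vx_0)-\lambda\mI\big)M.
\]
For any real $\vv\in\real^d$ we then have $\vv^T\nabla^2_{\vx}\psi(\vx_0;\vu)\,\vv=2\,(M\vv)^*\big(\mQ(\vx_0)-\lambda\mI\big)(M\vv)\ge 0$, since $\mQ(\vx_0)-\lambda\mI$ is positive semidefinite; this establishes the claim.

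The computation itself is routine; the only place that needs care is the simplification of the real-part term, where it is essential both that $\lambda$ is real (so the $\Re$ is redundant) and that it is the \emph{minimal} eigenvalue (so that the congruence is of a positive semidefinite, not merely Hermitian, matrix). I expect the main conceptual obstacle to be recognizing the congruence structure $M^*(\mQ(\vx_0)-\lambda\mI)M$ concealed inside \eqref{eq:hess}; once it is isolated, positive semidefiniteness follows immediately from the spectral characterization of the minimal eigenvalue.
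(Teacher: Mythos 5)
Your proposal is correct and follows essentially the same route as the paper: both substitute $\mQ(\vx_0)\vu=\lambda\vu$ into the Hessian formula \eqref{eq:hess}, collapse the bracket to the congruence $\diag(\overline{\vu})\left(\mQ(\vx_0)-\lambda\mI\right)\diag(\vu)$, and conclude positive semidefiniteness from the fact that $\lambda$ is the minimal eigenvalue of the Hermitian matrix $\mQ(\vx_0)$. Your explicit remarks that $\lambda$ is real (making the $\Re$ redundant) and that $M^*=[\mK,-\mK]\diag(\overline{\vu})$ because $\mK$ is real are points the paper leaves implicit, but the argument is the same.
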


\begin{proof}
Using the expression for $\nabla^2_{\vx}\psi(\vx_0;\vu)$ found in \cref{thm:gradient} and the fact that $\mQ(\vx_0)\vu = \lambda \vu$, we have
\begin{align}
\nabla^2_{\vx}\psi(\vx_0;\vu) &= 2[\mK,-\mK]\bigg[\diag(\overline{\vu})\mQ(\vx_0)\diag(\vu) - \Re[\diag(\overline{\vu})\diag(\lambda \vu)]\bigg]
\begin{bmatrix}
\mK^T \\
-\mK^T
\end{bmatrix} \\
&= 2[\mK,-\mK]\bigg[\diag(\overline{\vu})\mQ(\vx_0)\diag(\vu) - \lambda \diag(\overline{\vu})\diag(\vu)\bigg]
\begin{bmatrix}
\mK^T \\
-\mK^T
\end{bmatrix} \\
&= 2[\mK,-\mK]\diag(\overline{\vu})\bigg[\mQ(\vx_0)-\lambda \mI\bigg]\diag(\vu)
\begin{bmatrix}
\mK^T \\
-\mK^T
\end{bmatrix}.
\end{align}
Since $\lambda$ is the smallest eigenvalue of $\mQ(\vx_0)$, the matrix $\mQ(\vx_0)-\lambda \mI$ is positive semidefinite. Thus for any $\vz\in \complex^N$,
\begin{align}
\vz^*[\nabla^2_{\vx}\psi(\vx_0;\vu)]\vz &= 
2\vz^*[\mK,-\mK]\diag(\overline{\vu})\bigg[\mQ(\vx_0)-\lambda \mI\bigg]\diag(\vu)
\begin{bmatrix}
\mK^T \\
-\mK^T
\end{bmatrix}\vz \\
&= 2\left(\diag(\vu)
\begin{bmatrix}
\mK^T \\
-\mK^T
\end{bmatrix}
\vz\right)^*
\bigg[\mQ(\vx_0)-\lambda \mI\bigg]\diag(\vu)
\begin{bmatrix}
\mK^T \\
-\mK^T
\end{bmatrix}\vz \\
&\ge 0.
\end{align}
\end{proof}

The next lemma allows us to systematically construct a periodic subset of the level-set $\{ \vx \in \real^N ~|~ \psi(\vx;\vu) = \lambda\}$ when ($\lambda,\vu$) is an eigenpair of $\mQ(\vzero)$. Because the field we are working with is periodic, we can expect the level sets to be periodic. The set of points we find depends on the invariance of the eigenspace of $\mQ(\vzero)$ with respect to sign changes in the entries of $\vu$. One application of this result is to predict locations in $\real^N$ that are global minima when we take $\lambda$ as the smallest eigenvalue of $\mQ(\vzero)$.
\begin{lemma}
 \label{lem:indet}
 Let $\vu = [\vv;\pm \vv]$ be a real unit norm eigenvector of $\mQ(\vzero)$ associated with eigenvalue
 $\lambda$. Let $Z = \{ j ~|~ v_j =0 \}$ and $E = \{ j ~|~ v_j \ne 0 \}$. Define the space $S_{\vzero} = \linspan\{\va_j ~|~ j\in Z \}$. Then we have
 \begin{equation}
 S_{\vn} = S_{\vzero} + \sum\limits_{i\in E} \frac{n_i\va_i}{2} \subset \{ \vx \in \real^N ~|~ \psi(\vx;\vu) = \lambda\},
 \end{equation}
for all $\vn \in \znat^{|E|}$ such that $[\vw; \pm \vw]$ is a $\lambda-$eigenvector of $\mQ(\vzero)$ satisfying $\vw_E = (-1)^{\vn} \vv_E$ and $\vw_Z = \vv_Z$. For us $\vv_E$ is the restriction of the vector $\vv$ to the set of indices $E$, i.e. $(\vv_E)_i = \vv_{E_i}$, where $i = 1,\ldots,|E|$ and $E_i$ is the $i-$th element of the ordered set $E$, and similarly for $\vw_Z$.
\end{lemma}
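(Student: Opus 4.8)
The plan is to exploit the translation identity for $\mQ$ established in the proof of \cref{thm:gradient}. Setting $\vx_0=\vzero$ there yields $\mQ(\vx)=\exp[i\mD(\mK^T\vx)]^*\mQ(\vzero)\exp[i\mD(\mK^T\vx)]$, so that
\[
\psi(\vx;\vu) = \vu^*\mQ(\vx)\vu = \M{\exp[i\mD(\mK^T\vx)]\vu}^*\,\mQ(\vzero)\,\M{\exp[i\mD(\mK^T\vx)]\vu}.
\]
The idea is to show that for every $\vx\in S_{\vn}$ the modulated vector $\exp[i\mD(\mK^T\vx)]\vu$ is precisely the vector $[\vw;\pm\vw]$ appearing in the hypothesis; the level-set membership then follows at once from the fact that $[\vw;\pm\vw]$ is a $\lambda$-eigenvector of $\mQ(\vzero)$.

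The central computation is to evaluate $\mK^T\vx$ for $\vx\in S_{\vn}$. Recalling that the primitive lattice vectors satisfy $\mK^T\va_j = 2\pi\ve_j$, a point $\vx = \sum_{j\in Z}c_j\va_j + \sum_{i\in E}\tfrac{n_i}{2}\va_i \in S_{\vn}$ has
\[
\mK^T\vx = 2\pi\sum_{j\in Z}c_j\ve_j + \pi\sum_{i\in E}n_i\ve_i,
\]
so that the $j$-th coordinate of $\mK^T\vx$ equals $2\pi c_j$ for $j\in Z$ and $\pi n_i$ for $i\in E$. I would then read off the entries of $\exp[i\mD(\mK^T\vx)]\vu$ blockwise, using $\vu=[\vv;\pm\vv]$ and $\mD(\mK^T\vx)=\diag(\mK^T\vx,-\mK^T\vx)$. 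For the indices $j\in Z$ the amplitude $v_j$ vanishes, so the continuous phase $2\pi c_j$ is immaterial and those entries remain $0$; this is precisely why the entire linear span $S_{\vzero}$ can be absorbed. For the indices $i\in E$ the phase $\pm\pi n_i$ produces the factor $(-1)^{n_i}$, sending $v_i$ to $(-1)^{n_i}v_i$. Performing this in both the $+$ and $-$ blocks yields exactly $\exp[i\mD(\mK^T\vx)]\vu = [\vw;\pm\vw]$, with $\vw_E = (-1)^{\vn}\vv_E$ and $\vw_Z=\vv_Z$ as in the statement.

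With this identity in hand, the hypothesis that $[\vw;\pm\vw]$ is a $\lambda$-eigenvector gives $\mQ(\vzero)[\vw;\pm\vw]=\lambda[\vw;\pm\vw]$, whence $\psi(\vx;\vu)=\lambda\,\|[\vw;\pm\vw]\|^2$. Since sign flips preserve magnitudes, $\|\vw\|=\|\vv\|$, so $\|[\vw;\pm\vw]\|^2 = 2\|\vv\|^2 = \|\vu\|^2 = 1$, and therefore $\psi(\vx;\vu)=\lambda$ for every $\vx\in S_{\vn}$, which is the claim. I expect the difficulty to be bookkeeping rather than conceptual: one must track carefully how the partition into $Z$ (zero amplitudes, giving continuous translational freedom) and $E$ (nonzero amplitudes, permitting only half-period translations that flip signs) interacts with the two $\pm$ blocks. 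The one genuinely load-bearing hypothesis is that $[\vw;\pm\vw]$ again be a $\lambda$-eigenvector of $\mQ(\vzero)$; without it the modulated vector need not be an eigenvector, and $[\vw;\pm\vw]^*\mQ(\vzero)[\vw;\pm\vw]$ could differ from $\lambda$, so the level-set containment would fail.
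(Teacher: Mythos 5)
Your proposal is correct and follows essentially the same route as the paper: both conjugate $\mQ(\vzero)$ by the phase modulation $\exp[i\mD(\mK^T\vx)]$ (equivalently, use the translation identity), exploit the duality $\vk_i\cdot\va_j = 2\pi\delta_{ij}$ to get trivial phases on the $Z$-block and sign flips $(-1)^{n_i}$ on the $E$-block, and identify the modulated vector with the hypothesized $\lambda$-eigenvector $[\vw;\pm\vw]$. Your explicit unit-norm computation $\|[\vw;\pm\vw]\|^2 = 2\|\vv\|^2 = 1$ is a minor clarification the paper leaves implicit in the step $\psi(\vz;\vu)=\psi(\vzero;\vu)=\lambda$.
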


\begin{proof}
Let $\vz\in S_{\vn}$. Then $\vz = \vz_1 + \vz_2$, where 
\begin{equation}
\vz_1 \in \linspan\{\va_j ~|~ j\in Z\} = \{\vk_j ~|~ j \not\in Z\}^{\perp}
~\text{and}~
\vz_2 = \sum\limits_{i\in E} \frac{n_i\va_i}{2},
,\end{equation}
for some $\vn\in Z^{|E|}$ such that $[\vw; \pm \vw]$ is a $\lambda-$eigenvector of $\mQ(\vzero)$, where $\vw_E = (-1)^{\vn} \vv_E$ and $\vw_Z = \vv_Z$.

For $j\in Z$, we have by definition that $v_j=0$, so $(\exp[i\mK^T\vz])_j \vv_j=\vv_j=0$. For $j\not\in Z$, we have $\vk_j\cdot \vz_1 = 0$ and
\begin{equation}
\vk_j\cdot \vz_2 = \vk_j\cdot \left(\sum\limits_{i\in E} \frac{n_i\va_i}{2} \right) 
= \sum\limits_{i\in E} \frac{1}{2} n_i\vk_j\cdot \va_i \\
= \sum\limits_{i\in E} \frac{1}{2} n_i2\pi\delta_{ij} \\
= \pi n_j.
\end{equation}
Thus, for $j\not\in Z$ we have that
\begin{equation}
(\exp[i\mK^T\vz])_j\vv_j = (-1)^{n_j}\vv_j.
\end{equation}
Now we can see that
\begin{equation}
\exp[i[\mK,-\mK]^T\vz]\odot [\vv;\pm \vv] = [\vw; \pm \vw],
\end{equation}
where $\vw_E = (-1)^{\vn}\vv_E$ and $\vw_Z = \vv_Z$. Thus $\exp[i[\mK,-\mK]^T\vz]\odot [\vv;\pm \vv]$ is a $\lambda-$ eigenvector of $\mQ(\vzero)$, meaning that $\psi(\vz;\vu) = \psi(\vzero;\vu) = \lambda$.
\end{proof}

\begin{remark}
 The previous result is a generalization of Lemma 2.3 in
 \cite{Guevara:2019:PPA}.
\end{remark}

\section{Control of quasiperiodic ARPs}
\label{sec:in-subspace}
We now consider ARPs corresponding to a superposition of plane waves
\eqref{eq:planewaves} where the plane wave directions $\vk_j \in \real^d$,
$j=1,\ldots, N$, and $N > d$. We recall from \cref{sec:quasiperiodic} that for
certain $\mK$, we can guarantee such ARPs are quasiperiodic. We start in
\cref{sec:qp:min} by giving the second-order necessary conditions for the
minima. Then in \cref{sec:qp:ca}, we show that if we restrict ourselves to
transducer parameters with the same amplitude, we can translate the ARPs at will
and also understand the changes to the ARP to first order.

\subsection{Characterization of spatial minima}
\label{sec:qp:min}
First, recall that \cref{lem:zero-gradient} and \cref{lem:pos-semidef} give the gradient and Hessian for a periodic ARP. We can obtain the gradient and Hessian for the quasiperiodic ARP by restricting to the lower-dimensional affine subspace $\{\vy = \mK^T \vx  ~|~ \vx\in \real^d\}$ by applying the chain rule. Recalling that $\psi_d(\vx;\vu) = \psi_N(\mK^T\vx;\vu)$, the gradient and Hessian of $\psi_d(\vx;\vu)$ are 
\begin{equation}
\nabla_{\vx}\psi_d(\vx;\vu) = \mK\nabla_{\vy}\psi_N(\mK^T \vx;\vu)
\label{eq:grad-restricted}
\end{equation}
and
\begin{equation}
\nabla_{\vx}^2\psi_d(\vx;\vu) = \mK\nabla_{\vy}^2\psi_N(\mK^T \vx;\vu) \mK^T.
\label{eq:hess-restricted}
\end{equation}
Thus, if $\vx$ is a minimizer of $\psi_d(\vx;\vu)$, the second-order necessary optimality conditions imply that $\nabla_{\vx}\psi_d(\vx;\vu) = 0$ and $\nabla_{\vx}^2\psi_d(\vx;\vu)$ is positive semidefinite.

\subsection{Rotations and reflections}
\label{sec:rotations}
We study the rotations and reflections that are possible if the transducer directions
$\vk_1,\ldots,\vk_N$ are fixed ahead of time. 

To fix ideas, we begin first in 2D and assume the transducer directions are $\vk_j = [\cos((j-1)\pi/N), \sin((j-1)\pi/N)]$, $j=1,\ldots,N$. In this situation, the possible rotation angles are $\pm (j-1)\pi/N, j=1,\hdots,N$, where $N$ is the number of transducer directions. These are exactly the angles of rotational symmetry for regular $2N-$gons. These rotations can be obtained by replacing the original transducer parameters $\vu$ by $\mP\vu$, where $\mP$ is a permutation matrix that applies an appropriate cyclic permutation to the elements of $\vu$. For an explicit example of rotation, see \cref{ex:rot}.

Reflections are also possible along the lines of $x_2 = \tan[(j-1)\pi/(2N)], j=1,\hdots,N$. These are exactly the lines of reflective symmetry for regular $2N-$gons. These reflections can be obtained by replacing the original transducer parameters $\vu$ by $\mP\vu$, where $\mP$ is a permutation matrix that applies the appropriate reflection to the transducer parameters. For an explicit example of reflection, see \cref{ex:rot}

More generally, if $\mR$ is a $d \times d$ real unitary matrix, then we can see that
\begin{equation}
 p(\mR \vx;\vu) = \widetilde{p}(\vx; \vu),
\end{equation}
where $\widetilde{p}$ is obtained as $p$, but replacing $\mK$ with
$\widetilde{\mK} = \mR^T \mK$ (see \eqref{eq:planewaves}). If the unitary matrix
$\mR$ is such that $\widetilde{\mK} = \mK \mD \mP$, where $\mP$ is a
permutation and $\mD$ is a diagonal matrix with plus or minus ones, then we
should be able to transform the ARP by $\mR^T$ without changing the transducer
directions by simply permuting and adjusting the signs of the transducer
parameters. This could allow for certain rotations and reflections of 3D ARP
patterns (since we are not restricting ourselves to, e.g., cyclic permutations).

\begin{example}
\label{ex:rot}
Consider again an ARP constructed with $N=5$ transducer directions, $\vk_j = [\cos((j-1)\pi/N), \sin((j-1)\pi/N)]^T, j=1, \ldots, 5$. Here we take the ARP parameters $\mathfrak{a}=\mathfrak{b}=1$, and transducer parameters $\vu = [-1,-1,1,1,1,-1,-1,1,1,1]^T$. This ARP is shown in \cref{fig:rotation}(a).

With the tenfold symmetry in the transducer arrangement, there are ten possible angles of rotation (up to signs and modulo $2\pi$), all multiples of $\pi/5$. We will perform a rotation by $3\pi/5$. To do this, we need to replace $\vu$ by $\mP\vu$, where $\mP$ applies a cyclic permutation to $\vu$ such that the transducer parameter $\vu_1$, originally applied to the transducer with direction $\vk_1 = [1,0]$, is now applied to the transducer with direction $\vk_4 = [\cos(3\pi/5),\sin(3\pi/5)]$, and the other transducer parameters follow similarly (now being applied to the transducer that forms an angle of $3\pi/5$ with the original transducer). This permutation vector is $[4,5,6,7,8,9,10,1,2,3]$. The resulting rotated ARP is shown in \cref{fig:rotation}(b).

There are 20 possible lines of reflection for this ARP, all forming angles with the positive $x_1$ axis that are multiples of $\pi/2N$. We will perform a reflection across the line $x_2 = \tan(\pi/5)x_1$. To do this, we need to replace $\vu$ by $\mP\vu$, where $\mP$ applies a permutation to $\vu$ such that each transducer parameter $\vu_j$ is applied to the transducer that is the reflective image of the $j-$th transducer across the line $x_2 = \tan(\pi/5)x_1$. This permutation vector is $[3,2,1,10,9,8,7,6,5,4]$. The resulting reflected ARP is shown in \cref{fig:rotation}(c).

\begin{figure}
\begin{center}
\begin{tabular}{c@{}c@{}c}
	\includegraphics[width=0.3\textwidth]{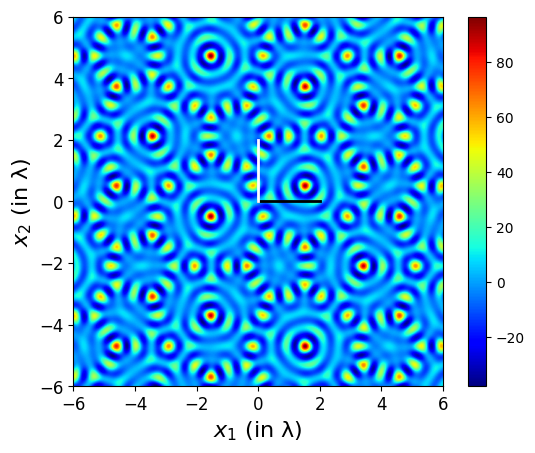} &
	\includegraphics[width=0.3\textwidth]{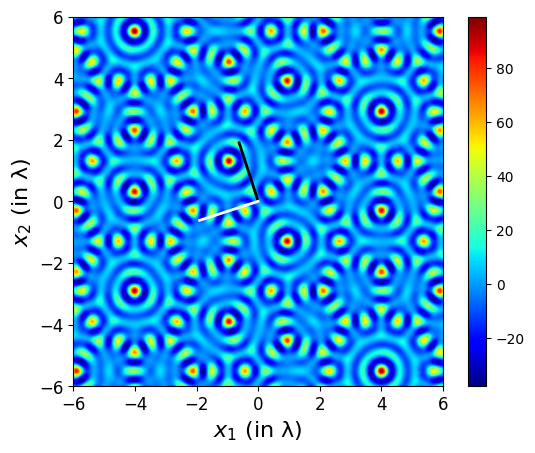} &
	\includegraphics[width=0.3\textwidth]{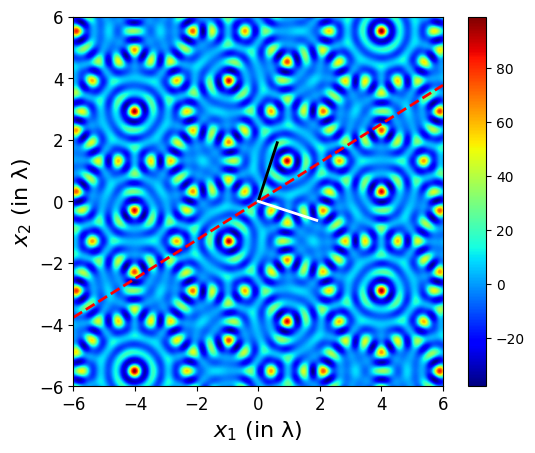} \\
	\includegraphics[width=0.1\textwidth]{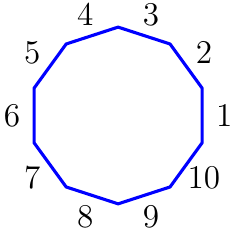} &
	\includegraphics[width=0.1\textwidth]{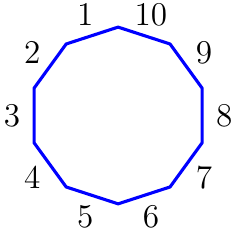} &
	\includegraphics[width=0.1\textwidth]{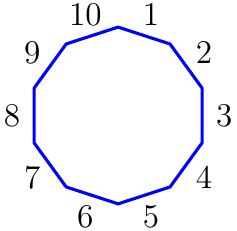} \\
	(a) & (b) & (c)
\end{tabular}
\end{center}
\caption{(a) Original ARP constructed with $N=5$ transducer directions. Here the ARP parameters are $\mathfrak{a}=\mathfrak{b}=1$ and the transducer parameters are $\vu = [-1,-1,1,1,1,-1,-1,1,1,1]^T$. The black and white line segments show the canonical basis vectors in $\real^2$ prior to any rotation or reflection. The decagon below has edges numbered according to the order of transducer parameters. (b) Rotation of (a) by $3\pi/5$. The basis vectors were also rotated. (c) Reflection of (a) across the line $x_2=\tan(\pi/5)x_1$, shown as a dashed red line. The basis vectors were also reflected. The decagons below show the necessary permutations of the transducer parameters.}
\label{fig:rotation}
\end{figure}

\end{example}
\subsection{Constant amplitude controls and spatial translations}
\label{sec:qp:ca}
At this point, we also recall from \cite{Guevara:2019:PPA} that a spatial translation of the ARP is equivalent to a constant amplitude change in the transducer parameters $\vu$, namely
\begin{equation}
\psi_N(\vy+\vh; \vu) = \psi_N(\vy; \exp[i\mD(\vh)]\vu),
\label{eq:transl}
\end{equation}

In the particular case where $\vh = \mK^T\veps$ for some $\veps \in \real^d$, we see that $\psi_d(\vx+\veps;\vu) = \psi_d(\vx;\exp[i\mD(\mK^T\veps)]\vu)$, by using \eqref{eq:transl} and $\psi_d(\vx;\vu) = \psi_N(\mK^T\vx;\vu)$. Thus, we can spatially translate a quasiperiodic ARP and its pattern of particles by simply adjusting the phases of the transducer parameters. In \cref{ex:translation}, we show one of the ARPs presented in \cref{fig:arp} being translated in this way.

\begin{example}
\label{ex:translation}

Consider the ARP from \cref{fig:arp}(b), where $N=5$ and $d=2$. We reproduce this ARP in \cref{fig:translation}(a). Next, we use \eqref{eq:transl} to translate this ARP in the plane. Letting $\vh = \mK^T\veps$ and $\veps = [\lambda,2\lambda]$ (where the columns of $\mK$ are the wavevectors from \cref{ex:arp}), we take the new transducer parameters to be $\exp[i\mD(\vh)]\vu$, and show the result in \cref{fig:translation}(b).

\begin{figure}
\begin{center}
\begin{tabular}{c@{}c}
\includegraphics[width = 0.45\textwidth]{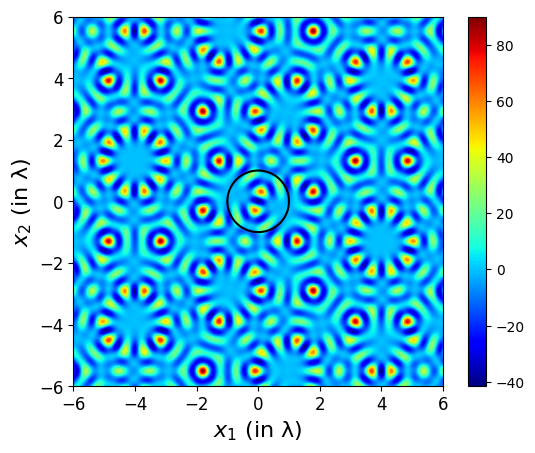} &
\includegraphics[width = 0.45\textwidth]{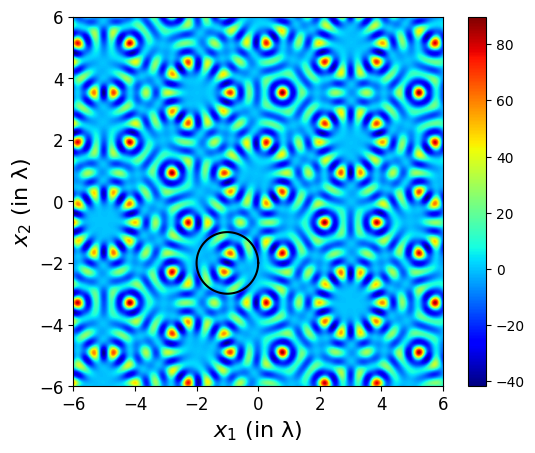} \\
(a) & (b)
\end{tabular}
\end{center}
\caption{The ARP shown in (a) is the same one shown in \cref{fig:arp} (b). The ARP in (b) was computed by changing the transducer parameters to $\exp[i\mD(\mK^T\veps)]\vu$, where $\vu$ are the transducer parameters from (a) and $\veps = [\lambda,2\lambda]$. The black circles show how the pattern was shifted. The pattern in the circle in (a) moves from the origin to $(-\lambda,-2\lambda)$ in (b), corresponding to evaluating the ARP function at $\vx + \veps$.}
\label{fig:translation}
\end{figure}

\end{example}

Naturally, we can also study what happens for other $\vh$ by linearization in $\real^N$. Indeed by using \eqref{eq:transl} and $\psi_d(\vx;\vu) = \psi_N(\mK^T\vx;\vu)$, we  can conclude that as $\epsilon \to 0$
\begin{equation}
\begin{aligned}
\psi_d(\vx;\exp[i\epsilon\mD(\vh)]\vu) = \psi_d(\vx;\vu) + \epsilon\nabla_{\vy}\psi_N(\mK^T \vx;\vu)^T \vh  + o(\epsilon).
\end{aligned}
\label{eq:phase:change}
\end{equation}
Thus the ARP with new transducer parameters $\exp[i\mD(\vh)]\vu$ (which amounts to only changing the phases), can be predicted to first order by calculating the gradient of the ARP $\psi_N$ using the explicit formula in \cref{sec:gradhess}.

In the next section, we show that if $\vh \in \range(\mK^T)^{\perp}$, the change in the ARP can be understood in the sense of changing the constraints in a constrained minimization formulation of the problem.

\section{Constrained optimization formulation}
\label{sec:constrained}
We would like to find local minima of the ARP only over a $2-$ or $3-$dimensional physical region. Written as a minimization problem, we need to solve
\begin{equation}
\min_{\vx\in \real^d} \psi_N(\mK^T\vx;\vu),
\end{equation}
where $d=2$ or 3. Notice that this could be reframed as the minimization of an $N-$dimensional field restricted to a $d-$dimensional subspace, where $d < N$:
\begin{equation}
\min_{\vy\in\real^N} \psi_N(\vy;\vu) \quad \text{s.t.} \quad \vy \in \{\mK^T\vx ~|~\vx \in \real^d\}.
\label{eq:constrained-minimization}
\end{equation}

Assuming $\mK$ has full rank (i.e., $\text{rank}(\mK) = d$), the constraint $\vy \in \{\mK^T\vx~|~\vx \in \real^d\}$ is equivalent to the constraint $\mZ^T\vy = 0$, where $\mZ$ is an $N\times (N-d)$ matrix satisfying $\range(\mZ) = \nullspace(\mK)$. This allows us to reformulate \eqref{eq:constrained-minimization} as the equality constrained optimization problem
\begin{equation}
\label{eq:constrained}
 \min_{\vy\in\real^N} \psi_N(\vy;\vu) \quad \text{s.t.} \quad \mZ^T\vy = 0.
\end{equation}

The next example illustrates the optimization problem \eqref{eq:constrained} for $N=2$, $d=1$.
\begin{example}
\label{ex:1D-constraint}
When $N=2$ and $d=1$, the constraint in \eqref{eq:constrained} is a line. In \cref{fig:1D}, we consider a transformation orthogonal to the constraint and show both the shifted constraint and the resulting transformed ARP function. 
\end{example}

\begin{figure}
\begin{center}
\begin{tabular}{c@{}c}
\includegraphics[width = 0.45\textwidth]{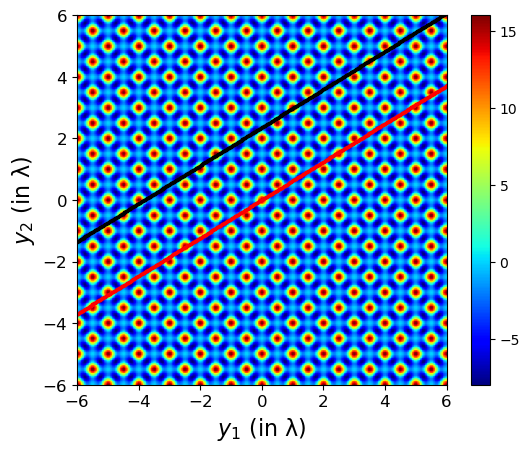} &
\includegraphics[width = 0.45\textwidth]{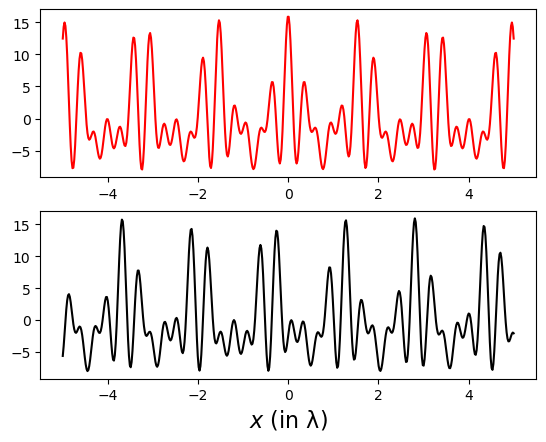} \\
(a) & (b)
\end{tabular}
\end{center}
\caption{(a) The function $g(\vy) = \psi(\vy;\vu)$, where $\vu = [1,1,1,1]^T$, the ARP parameters are $\mathfrak{a}=\mathfrak{b}=1$, and the transducer directions are $\ve_1,\ve_2$. (b) The function $f(x) = g(\mK^Tx+z\vh)$, where $\mK = [\phi,1]$, $\phi$ is the golden ratio, $\vh = [1,0]^T$, $z=0$ for the red plot, and $z=2\lambda$ for the black plot. The red and black plots correspond to evaluating the ARP shown in (a) along the red and black lines, respectively. See \texttt{1d\_fig\_anim.mp4} in the supplementary materials for an animation.}
\label{fig:1D}
\end{figure}

We start in \cref{sec:optimality} by applying the method of Lagrange multipliers to confirm the characterization of local minima given through other means in \cref{sec:qp:min}. The Lagrange multipliers are related to a projection of the gradient onto the subspace orthogonal to the constraints in \cref{sec:lagrange}. First order estimates of certain infinitesimally small phase changes of the transducer parameters are given in terms of the $N-$dimensional ARP (\cref{sec:dd}). These correspond to either translation of the ARP within the space of constraints or shifting the constraint subspace.

\subsection{Method of Lagrange multipliers}
\label{sec:optimality}
The Lagrange multiplier vector for the constrained optimization problem \eqref{eq:constrained} has length $N-d$ because there are exactly $N-d$ constraints. The associated Lagrangian function is
\begin{equation}
\mathcal{L}(\vy,\vlambda) = \psi_N(\vy;\vu) - \vlambda^T\mZ^T\vy.
\label{eq:lagrangian}
\end{equation}
We now show that the constrained minimization formulation of the problem corresponds to our first and second derivative results in \cref{lem:zero-gradient} and \cref{lem:pos-semidef} through the first- and second-order optimality conditions (also known as KKT conditions, see e.g. \cite{Nocedal:2006:NO}).

The \emph{first-order necessary condition} for optimality for \eqref{eq:constrained} is as follows. If  $\vy_0$ is a local minimum, then
\begin{equation}
\begin{aligned}
\nabla_{\vy}\psi_N(\vy_0;\vu) &= \mZ\vlambda,~\text{and}\\
\mZ^T\vy_0 &= 0.
\end{aligned}
\label{eq:kkt1}
\end{equation}
From \eqref{eq:kkt1}, we see that $\nabla_{\vy}\psi_N(\vy_0;\vu) \in \nullspace(\mK)$, so $\mK\nabla_{\vy}\psi_N(\vy_0;\vu) = 0$, which is exactly \eqref{eq:grad-restricted}.

The \emph{second-order necessary condition} for optimality for the minimization problem \eqref{eq:constrained} is as follows. If $\vy_0$ is a local minimum, then
\begin{equation}
\vs^T \nabla_{\vy}^2 \psi_N(\vy_0;\vu) \vs \ge 0 \qquad \text{for all~} \vs \in \nullspace(\mZ^T).
\label{eq:2nd-order-cond}
\end{equation}
Since $\nullspace(\mZ^T) = \range(\mK^T)$, we can rewrite \eqref{eq:2nd-order-cond} as
\begin{equation}
\vw^T \mK \nabla_{\vy}^2 \psi_N(\vy_0;\vu) \mK^T \vw \ge 0 \qquad \text{for all~} \vw \in \complex^d,
\end{equation}
which implies that $\mK \nabla_{\vy}^2 \psi_N(\vy_0;\vu) \mK^T$ is positive semidefinite, which is exactly \eqref{eq:hess-restricted}.

\subsection{Lagrange multipliers and the orthogonal space gradient}
\label{sec:lagrange}
The first-order necessary KKT condition \eqref{eq:kkt1} is satisfied at very particular points, including minimizers of \eqref{eq:constrained}. However, we can relax this condition to define a field $\vlambda(\vy)$ for all feasible $\vy = \mK^T \vx$, such that it agrees with the Lagrange multipliers at points that satisfy \eqref{eq:kkt1}. We can do this by expressing $\vlambda(\vy)$ as the solution to the linear least squares problem:
\begin{equation}
  \vlambda(\vy) = \argmin \| \nabla_{\vy} \psi_N(\vy;\vu) - \mZ \vlambda \|^2,
\end{equation}
for all $\vy = \mK^T \vx$ and fixed $\vu$. Under the assumptions on $\mZ$, the solution is given by
\begin{equation}
\vlambda(\vy) = (\mZ^T\mZ)^{-1}\mZ^T\nabla_{\vy} \psi_N(\vy;\vu).
\label{eq:lagrange-multipliers}                                     
\end{equation}  
We notice that $\mZ\vlambda(\vy)$ is the orthogonal projection of $\nabla_{\vy}
\psi_N(\vy;\vu)$ onto the subspace $\range(\mK^T)^{\perp} = \nullspace(\mK)= \range(\mZ)$. Furthermore, if we assume the columns of $\mZ$ are orthonormal i.e. $\mZ^T\mZ= \mI$, equation \eqref{eq:lagrange-multipliers} simplifies to
\begin{equation}
\vlambda(\vy) = \mZ^T\nabla_{\vy} \psi_N(\vy;\vu),
\label{eq:directional-derivative}
\end{equation}
showing that the Lagrange multipliers are components of the gradient of $\psi_N(\vy;\vu)$ in the basis formed by the columns of $\mZ$, which is a basis for the subspace orthogonal to the constraints.

\subsection{Infinitessimally small phase changes}
\label{sec:dd}
We now revisit the linearization \eqref{eq:phase:change} when the transducer parameter $\vu$ is changed (in phase) to $\exp[i\epsilon \mD(\vh)] \vu$ for a small $\epsilon$. There are two cases worthy of note here: $\vh \in \range(\mK^T) = \nullspace(\mZ^T)$ and $\vh \in \nullspace(\mK) = \range(\mZ)$.

If $\vh \in \range(\mK^T) = \nullspace(\mZ^T)$, we can find a $\vz$ such that $\vh = \mK^T \vz$. By the translation relation \eqref{eq:transl}, we have $\psi_d(\vx;\exp[i\epsilon \mD(\vh)] \vu) = \psi_N(\mK^T(\vx + \epsilon\vz) ; \vu) = \psi_d(\vx + \epsilon \vz;\vu)$. Thus a phase adjustment of the form $\exp[i\epsilon \mD(\mK^T \vz)]$ is equivalent in $\real^d$ to a translation by $\vz$. In $\real^N$, the phase adjustment amounts to a translation within $\range(\mK^T)$. By the linearization \eqref{eq:phase:change} we get as $\epsilon \to 0$ that
\begin{equation}
	\psi_d(\vx + \epsilon \vz;\vu) = \psi_d(\vx;\vu) + \epsilon \nabla_{\vy}\psi_N(\mK^T\vx;\vu)^T \mK^T \vz + o(\epsilon).
\end{equation}
Thus the sensitivity of the $d-$dimensional ARP to translation in the direction $\vz \in \real^d$ is given by $\mK \nabla_{\vy}\psi_N(\mK^T\vx;\vu) = \nabla \psi_d(\vx;\vu)$.

On the other hand, if $\vh \in \nullspace(\mK) = \range(\mZ)$ we can find a $\vq$ such that $\vh = \mZ \vq$ and the linearization \eqref{eq:phase:change} as $\epsilon \to 0$ gives:
\begin{equation}
\begin{aligned}
\psi_d( \vx ; \exp[i\epsilon\mD(\mZ \vq)]\vu) &= \psi_d( \vx; \vu) + \epsilon \nabla_{\vy} \psi_N(\mK^T \vx;\vu)^T \mZ\vq + o(\epsilon).
\end{aligned}
\end{equation}
Hence the sensitivity of the ARP to phase changes of the form $\vh = \mZ \vq$ (orthogonal to $\range(\mK^T)$) is given by $\mZ^T \nabla_{\vy} \psi_N(\mK^T \vx;\vu)$.
Intuitively, this corresponds to translating the constraint subspace $\range(\mK^T)$ in a normal direction in $N-$dimensional space. Moreover, the sensitivity agrees with the Lagrange multiplier \eqref{eq:directional-derivative} since $\vlambda(\mK^T \vx) = \mZ^T \nabla_{\vy} \psi_N(\mK^T \vx;\vu)$.

\begin{example}
\label{ex:dir-deriv}
Consider again the ARP from \cref{fig:arp}(b). The fields shown in (b) and (c) of \cref{fig:dir-deriv} show the sensitivity to phase changes for directions $\vh$ parallel to the plane (b) and orthogonal to the plane (c). For the parallel direction, we chose the first row of $\mK$. For the orthogonal direction, we found a vector in $\text{null}(\mK)$.
\end{example}

\begin{figure}
\begin{center}
\begin{tabular}{c@{}c@{}c}
\includegraphics[width = 0.3\textwidth]{arp_10fold.png} &
\includegraphics[width = 0.3\textwidth]{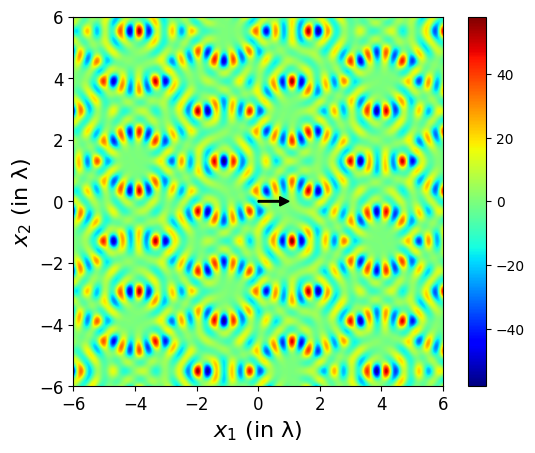} &
\includegraphics[width = 0.3\textwidth]{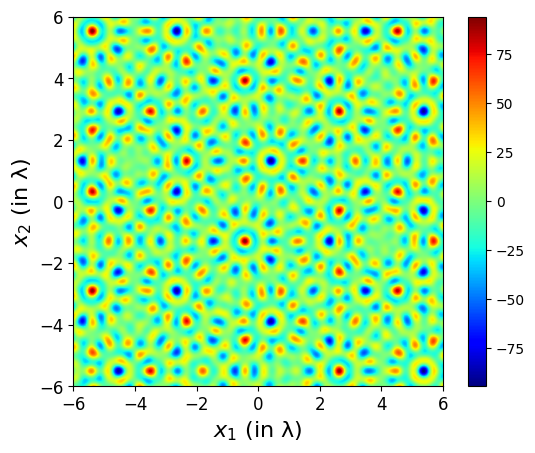} \\
(a) & (b) & (c)
\end{tabular}
\end{center}
\caption{(a) The ARP from \cref{fig:arp}(b). (b) 
The sensitivity of the ARP shown in the panel (a) (as calculated in \cref{sec:dd})
to multiplication by $\exp[i\mD(\vh)]$, with $\vh = [\cos(\pi/5),\cos(2\pi/5)$, $\cos(3\pi/5),\cos(4\pi/5),\cos(5\pi/5)]^T$ (the first row of $K$). Multiplication by $\exp[i\mD(\vh)]$ results in a translation 
indicated with a black arrow. (c) The sensitivity of (a) with $\vh = [1,-1,1,-1,1]^T \in \nullspace(\mK)$.}
\label{fig:dir-deriv}
\end{figure}

\section{Constant power changes}
\label{sec:power}
We would like to design a continuous transition of the particle structure 
between the patterns resulting from the ARPs corresponding to $\psi_d(\vx;\vu_0)$ and $\psi_d(\vx;\vu_1)$. We recall that $\psi_d(\vx; \vu) = \psi_N (\mK^T \vx; \vu)$, where $\psi_N$ is given as in \eqref{eq:ARP-N} and $\vu \in \complex^{2N}$ are the ``transducer parameters'' or coefficients used as weights in the linear combination of plane waves \eqref{eq:planewaves}. We assume that both transducer parameters correspond to the same power, i.e., without loss of generality $\|\vu_0\| = \|\vu_1\|=1$. It is reasonable to impose that any transducer parameters in the transition satisfy the same constant power constraint. We model this transition by a smooth open curve $\Gamma$ contained in the $\complex^{2N}$ unit sphere, with endpoints $\vu_0$ and $\vu_1$. 

To give a strategy for choosing such transition curves, we define the \emph{total ARP} associated with a region $\cA \subset \real^d$ with positive Lebesgue measure\footnote{The assumption of positive Lebesgue measure is unnecessary. A similar averaged ARP can be defined on zero measure lower dimensional subsets of $\real^d$. For example, in $d=2$, a similar cost functional has been used for e.g., finite point collections or curves \cite{Greenhall:2015:UDS}.} by the average of the ARP in the region $\cA$:
\begin{equation}
	\psi_{\cA}(\vu) = \frac{1}{|\cA|}\int_{\cA} d\vx\, \psi_d(\vx;\vu).
	\label{eq:total:arp}
\end{equation}
Since $\psi_d(\vx;\vu)$ is a quadratic function of $\vu$
for a Hermitian matrix depending on $\vx$, the same applies by linearity to $\psi_{\cA}(\vu)$:
\begin{equation}
	\psi_{\cA}(\vu) = \vu^* \left( \frac{1}{|\cA|} \int_{\cA} d\vx\; \mQ(\mK^T \vx)\right) \vu,
\end{equation}
where $\mQ(\vx)$ is defined as in \eqref{eq:Qmat}. Using the bound (see \cite{Guevara:2019:PPA})
\begin{equation}
	\lambda_{\min}(\mQ(\vzero))\leq \psi_d(\vx;\vu) \leq \lambda_{\max}(\mQ(\vzero)),
\end{equation}
we deduce a similar bound for the total ARP
\begin{equation}
	\lambda_{\min}(\mQ(\vzero)) \leq \psi_{\cA}(\vu) \leq \lambda_{\max}(\mQ(\vzero)).
	\label{eq:psibound}
\end{equation}
We think of $\psi_{\cA}(\vu)$ as an energy even if it may be negative since $\mQ(\vzero)$ may be indefinite. 
We define the ``cost'' $C(\Gamma,\cA)$ of the transition along the curve $\Gamma$ as
\begin{equation}
	C(\Gamma,\cA) = \int_{\Gamma} dl(\vu) \psi_{\cA}(\vx;\vu).
	\label{eq:cost}
\end{equation}
One strategy to choose $\Gamma$ is to find a curve $\Gamma$ that minimizes the cost $C(\Gamma,\cA)$ under the constraints that $\Gamma$ lies on the $\complex^{2N}$ unit sphere and the endpoints of $\Gamma$ are $\vu_0$ and $\vu_1$. A simpler (but not equivalent) strategy is to notice that the bound \eqref{eq:psibound} implies 
\begin{equation} 
	C(\Gamma,\cA) \leq |\Gamma| \lambda_{\max}(\mQ(\vzero)),
	\label{eq:costbound}
\end{equation} 
where $|\Gamma|$ is the length of the curve $\Gamma$ under the standard $\complex^{2N}$ metric. Therefore, we may also take $\Gamma$ to be a geodesic on the sphere $\complex^{2N}$ joining $\vu_0$ and $\vu_1$, or in other words, a curve $\Gamma$ with endpoints $\vu_0$ and $\vu_1$ with minimal arc length. Two advantages of this strategy are that the geodesics on a sphere have explicit expressions, and the result is independent of the region of interest $\cA$. To find the geodesics, it is convenient to use the standard isomorphism between $\complex^{2N}$ and $\real^{4N}$, i.e., we identify $\vu \in \complex^{2N}$ with $\widetilde{\vu} \in \real^{4N}$ defined by $\widetilde{\vu} = [ \vu'; \vu'']$, where $\vu = \vu' + i \vu''$ and $\vu', \vu'' \in \real^{2N}$. Notice that this isomorphism preserves lengths since $\|\vu\|^2 = \|\vu'\|^2 + \|\vu''\|^2 = \|\widetilde{\vu}\|^2$. In this section, we use tilde to denote the purely real quantity associated with a complex quantity via this isomorphism.

The set of all transducer parameters $\vu$ with power equal to one is the unit sphere in $\complex^{2N}$, i.e. 
\begin{equation}
	S_{2N} = \{ \vu \in \complex^{2N} ~|~ \|\vu\| = 1\}.
\end{equation}  
The unit sphere $S_{2N}$ in $\complex^{2N}$ can be identified to the unit sphere in $\real^{4N}$, i.e.
\begin{equation} 
	\widetilde{S}_{4N} = \{ \widetilde{\vu} \in \real^{4N} ~|~ \|\widetilde{\vu}\| = 1\}.
\end{equation} 
In particular, a curve $\Gamma \subset S_{2N}$ joining $\vu_0$ and $\vu_1$ can be identified with a curve $\widetilde{\Gamma} \subset \widetilde{S}_{4N}$ joining $\widetilde{\vu}_0$ and $\widetilde{\vu}_1$. Moreover, the two curves have the same length, i.e., $|\Gamma| = |\widetilde{\Gamma}|$. Hence a geodesic $\Gamma_{\text{geodesic}}$ in $S_{2N}$ joining $\vu_0$ and $\vu_1$ can be identified to a geodesic $\widetilde{\Gamma}_{\text{geodesic}}$ in $\widetilde{S}_{4N}$ joining $\widetilde{\vu}_0$ and $\widetilde{\vu}_1$. As can be found in e.g. \cite{doCarmo:1992:RG}, a parametrization of $\widetilde{\Gamma}_{\text{geodesic}}$ by its arc length is 
\begin{equation}
	\widetilde{\Gamma}_{\text{geodesic}} = \left\{ \widetilde{\vu}_0 \cos(t) + \widetilde{\vv} \sin(t) ~\Big|~ t \in \left[0, |\widetilde{\Gamma}_{\text{geodesic}}|\right]\right\},
	\label{eq:geodesic}
\end{equation}
where $\widetilde{\vv} = (\widetilde{\vu}_1-\widetilde{\vu}_0) / \|\widetilde{\vu}_1-\widetilde{\vu}_0\|$ and $|\widetilde{\Gamma}_{\text{geodesic}}| = \arccos(\widetilde{\vu}_0 \cdot \widetilde{\vu}_1)$. An example of a geodesic joining two points is illustrated in \cref{fig:geodesic} for the unit sphere in $\real^3$.

\begin{figure}
	\centering
	\includegraphics[width=0.3\textwidth]{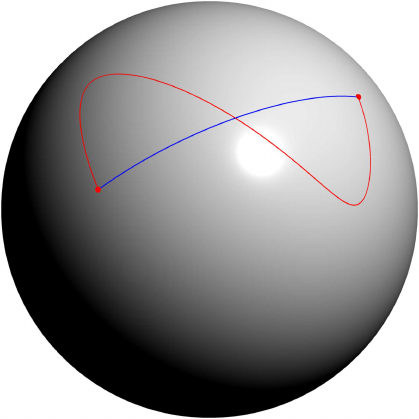}
	\caption{Both blue and red paths are smooth paths joining the same points on the unit sphere in $\real^3$. The blue path is a geodesic minimizing the arc length.}
	\label{fig:geodesic}
\end{figure}

\begin{example}\label{ex:geodesic}
We consider two ways of smoothly transitioning from the ARP in \cref{fig:translation}(a) where $N = 5$, $\lambda = 2\pi$, $\mathfrak{a}=\mathfrak{b}=1$ and $$\vu_0 = [1,1,1,1,1,-1,-1,-1,-1,-1],$$ to the ARP in \cref{fig:translation}(b) where $\vu_1 = \exp[i\mD(\mK^T\veps)] \vu_0$, with $\veps = [\lambda,2\lambda]$. We recall this transformation amounts to a translation of the ARP pattern, where the new origin is at $-\veps$. We consider two constant power paths joining $\vu_0$ and $\vu_1$: a ``direct path'' which can be interpreted as a translation, and a ``geodesic path'' which is the shortest path on the $\complex^{10}$ unit sphere.

\textbf{Direct path:} We define the curve $\Gamma_{\text{direct}} = \{ \vgamma_{\text{direct}}(t) ~|~ t \in [0,1] \}$ where $\vgamma_{\text{direct}}(t) = \exp[it\mD(\mK^T\veps)]\vu_0$. By construction, the endpoints of $\Gamma_{\text{direct}}$ are $\vu_0$ and $\vu_1$ and $\|\vgamma(t)\|=1$, so this curve lies on the $\complex^{10}$ unit sphere. Moreover its derivative with respect to $t$ is constant: $\dot{\vgamma}(t) = i \mD(\mK^T\veps) \vu_0$. Therefore, the length of the curve is given by $|\Gamma_{\text{direct}}| = \|\dot{\vgamma}(1)\| = \|\mD(\mK^T\veps)\vu_0\| =  \pi\sqrt{10} \approx 9.9$. ARPs evaluated on $[-6\lambda,6\lambda]^2$ for different $t \in [0,1]$ are given in \cref{fig:geochange} top row, which shows a constant velocity translation of the pattern. For an animation, see \texttt{anim\_translation.mp4} in the supplementary materials).

\textbf{Geodesic path:} We define the geodesic curve $\tilde{\Gamma}_{\text{geodesic}}$ joining $\widetilde{\vu}_0$ and $\widetilde{\vu}_1$ by using \eqref{eq:geodesic} on the $\real^{20}$ unit sphere. By the standard isomorphism, this defines a geodesic $\Gamma_{\text{geodesic}}$ joining $\vu_0$ and $\vu_1$ on the $\complex^{10}$ unit sphere. The length of these curves is $|\tilde{\Gamma}_{\text{geodesic}}| = |\Gamma_{\text{geodesic}}| = \arccos(\vw_0 \cdot \vw_1) \approx 1.4$, 
which is a significant decrease in length compared with the direct path in the previous case. 
The ARPs evaluated on $[-6\lambda,6\lambda]^2$ corresponding to different $t \in [0,|\Gamma_{\text{geodesic}}|]$ are given in \cref{fig:geochange} bottom row. For an animation see \texttt{anim\_translation\_geodesic.mp4} in the supplementary materials.

As expected, the geodesic path is shorter and by \eqref{eq:costbound} we expect to also reduce the total ARP in a region $\cA$.
\end{example}

\begin{figure}
	\centering
	\begin{tabular}{c@{\hspace{0.1em}}c@{\hspace{0.3em}}c@{\hspace{0.3em}}c@{\hspace{0.3em}}c@{\hspace{0.3em}}c@{\hspace{0.3em}}c}
		& $\vu_0$ & & & & & $\vu_1$ \\
		\raisebox{1em}{\rotatebox{90}{direct}} &
		\includegraphics[width=0.15\textwidth]{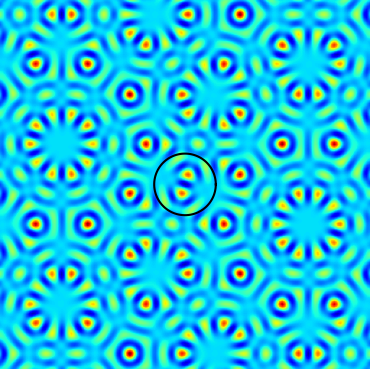} &
		\includegraphics[width=0.15\textwidth]{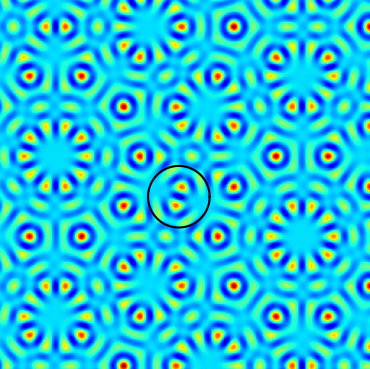} &
		\includegraphics[width=0.15\textwidth]{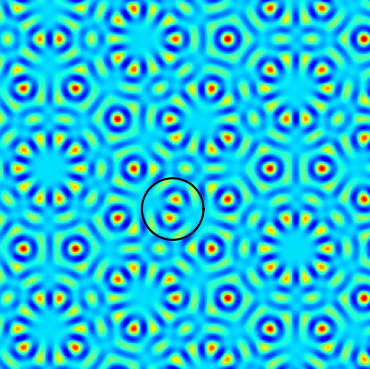} &
		\includegraphics[width=0.15\textwidth]{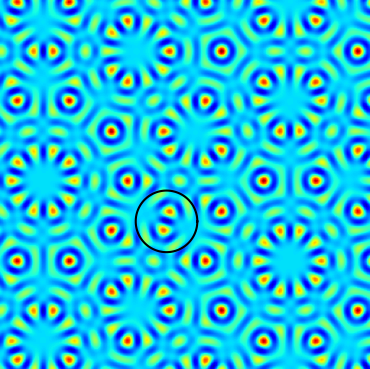} &
		\includegraphics[width=0.15\textwidth]{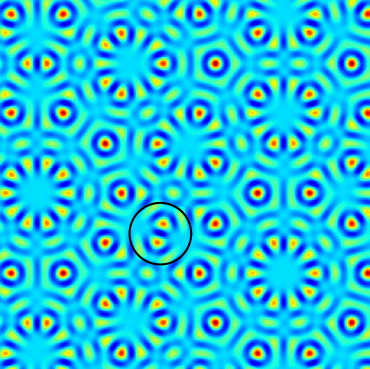} &
		\includegraphics[width=0.15\textwidth]{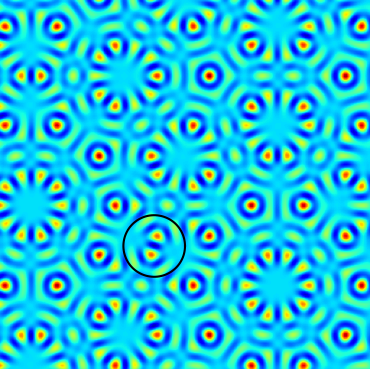} \\
		\raisebox{1em}{\rotatebox{90}{geodesic}} &
		\includegraphics[width=0.15\textwidth]{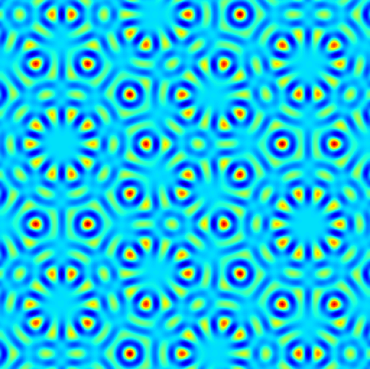} &
		\includegraphics[width=0.15\textwidth]{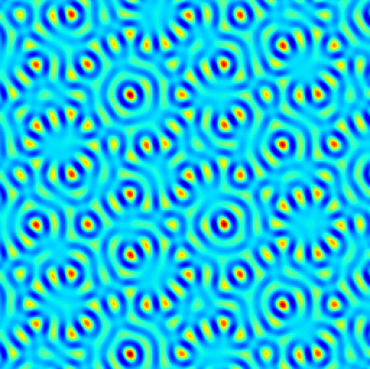} &
		\includegraphics[width=0.15\textwidth]{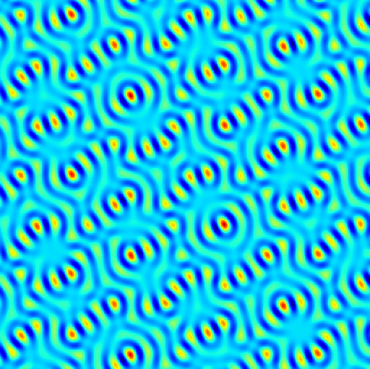} &
		\includegraphics[width=0.15\textwidth]{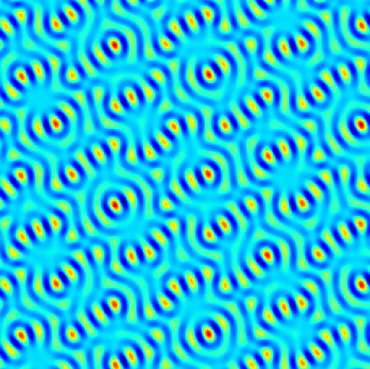} &
		\includegraphics[width=0.15\textwidth]{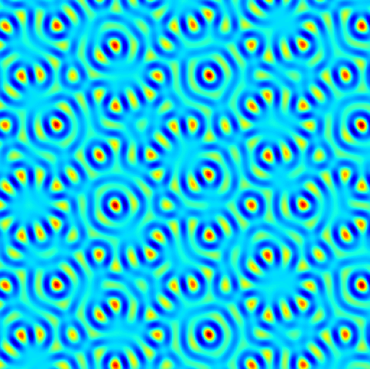} &
		\includegraphics[width=0.15\textwidth]{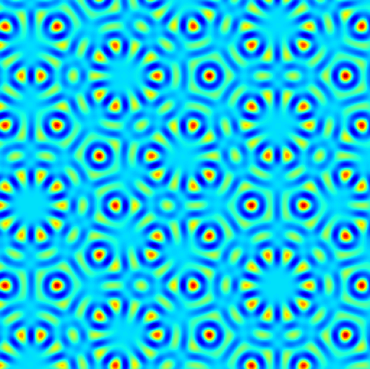} \\
	\end{tabular}
	\caption{Two ways of achieving a translation of the ARP configuration in \cref{fig:translation}(a) to that in \cref{fig:translation}(b). In the top row, each ARP configuration is translated by a smaller amount, as can be seen with the black circle that is included for reference. In the bottom row, we use a geodesic connecting the transducer parameters corresponding to the initial and final ARPs. The arc length between two consecutive frames on the top (resp. bottom) row is about $2.0$ (resp. $0.3$). See \cref{ex:geodesic} for the precise simulation parameters. For animations of these transitions see (a) \texttt{anim\_translation.mp4} and (b) \texttt{anim\_translation\_geodesic.mp4} in the supplementary materials.}
	\label{fig:geochange}
\end{figure}

\begin{example}
	We use the geodesic method to find a smooth transition between the ARPs in \cref{fig:rotation}(a) and its rotation by $3\pi/5$ about the origin, as in \cref{fig:rotation}(b). The transition is illustrated in \cref{fig:rotation:transition} and animated in \texttt{anim\_rotation.mp4} in the supplementary materials. In this particular case, there is no transition between these two ARPs consisting of only rotations since the rotation method in \cref{sec:rotations} is limited to a discrete set of angles.
\end{example}

\begin{figure}
	\centering
	\begin{tabular}{c@{\hspace{0.3em}}c@{\hspace{0.3em}}c@{\hspace{0.3em}}c@{\hspace{0.3em}}c@{\hspace{0.3em}}c}
		 $\vu_0$ & & & & & $\vu_1$ \\
		\includegraphics[width=0.15\textwidth]{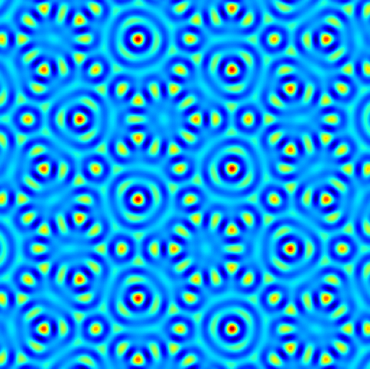} &
		\includegraphics[width=0.15\textwidth]{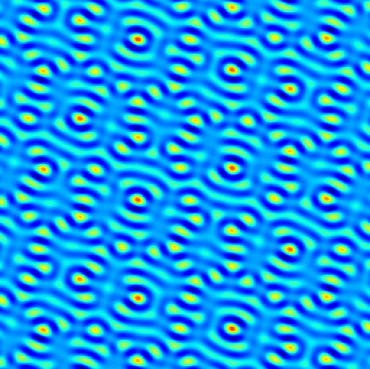} &
		\includegraphics[width=0.15\textwidth]{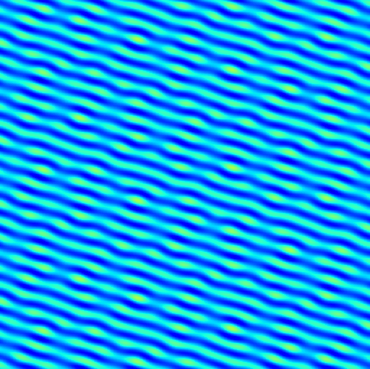} &
		\includegraphics[width=0.15\textwidth]{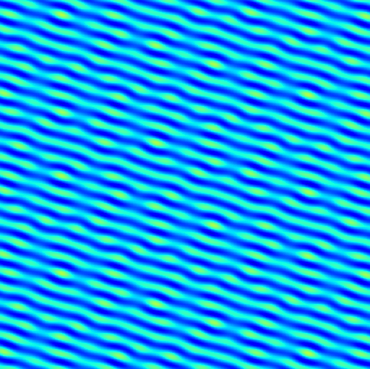} &
		\includegraphics[width=0.15\textwidth]{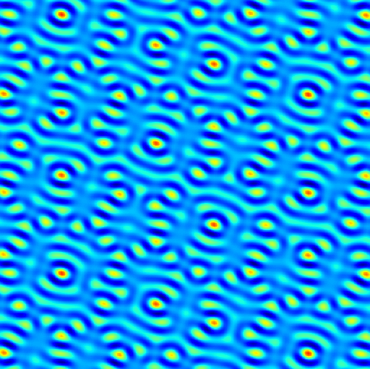} &
		\includegraphics[width=0.15\textwidth]{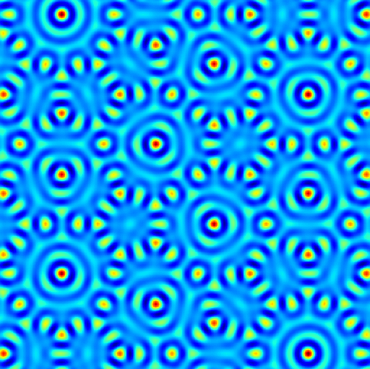} \\
	\end{tabular}
	\caption{Smooth transition between the ARP in \cref{fig:rotation}(a) and its rotated version \cref{fig:rotation}(b), using the geodesic method. See \cref{ex:rot} for the simulation parameters. For an animation see \texttt{anim\_rotation.mp4} in the supplementary materials.}
	\label{fig:rotation:transition}
\end{figure}

\section{3D examples}
\label{sec:num}
We now show examples of quasiperiodic ARPs in 3D, where the ultrasound transducers no longer need to be coplanar, and the wavevectors are in $\real^3$. The same condition on the matrix of wavevectors $\mK$ given in \cref{lem:condition} guarantees quasiperiodicity in 3D. But with this extra degree of freedom, we can construct ARPs that are periodic along only a particular direction and yet are not periodic in 3D (\cref{ex:3D_ex1}). We also show a quasiperiodic ARP in \cref{ex:3D_ex2} with icosahedral symmetry. 

\begin{example}[3D ARP that is periodic along only one direction]
\label{ex:3D_ex1}
We construct a 3D ARP that is $2\pi-$periodic along the $\ve_3$ direction and whose values at the planes $\vx_3 = 2n \pi$, $n\in \znat$, are identical to the 2D ARP in \cref{fig:arp}(b) plus a constant. Since this ARP is periodic in only one direction, we have that $\dim(\range(\mK^T) \cap (2\pi) \znat^n) = 1$. 
To construct this ARP, we take 
$\vk_j = [\tilde{\vk_j},0], j=1,\hdots,5$, where $\tilde{\vk_j}$ are the 2D wavevectors from \cref{ex:arp} and $\vk_6 = [0,0,1]^T$.

With these wavevectors, we construct the 3D ARP using \eqref{eq:ARP1} and \eqref{eq:planewaves}. For transducer parameters we use $\alpha_j=-\beta_j=1$, $j=1\ldots,6$ and for ARP parameters $\mathfrak{a}=\mathfrak{b}=1$. We can show that this 3D ARP restricted to a slice is equivalent to a 2D ARP plus a field depending on only the imaginary part of the 2D pressure and the $x_3$ value for the slice. Here we use the notation $\tilde{\vx} = [x_1,x_2]$ and $\tilde{\vu}$ is the vector of parameters $\alpha_j = -\beta_j = 1$, $j=1,\ldots, 5$. First, we write the 3D pressure $p_3(\vx;\vu)$ in terms of the 2D pressure $p_2(\tilde{\vx};\tilde{\vu})$. A simple calculation using \eqref{eq:planewaves} shows
\begin{equation}
p_3(\vx;\vu) = p_2(\tilde{\vx};\tilde{\vu}) + 2i\sin(x_3).
\end{equation}

The gradient can be split up similarly:
\begin{equation}
\nabla p_3(\vx;\vu) =
\begin{bmatrix}
\nabla p_2(\tilde{\vx};\tilde{\vu}) \\ 
2i\cos(x_3)
\end{bmatrix}.
\end{equation}

Then using \eqref{eq:ARP1}, the 3D ARP $\psi_3$ can be written in terms of the 2D ARP $\psi_2$:
\begin{equation}
\psi_3(\vx;\vu) = \psi_2(\tilde{\vx};\tilde{\vu}) + 4a\sin^2(x_3) - 4b\cos^2(x_3) + 4a\sin(x_3)\Im p_2(\tilde{\vx};\tilde{\vu}).
\label{eq:2.5d}
\end{equation}

We visualize different ``slices'' of the resulting 3D ARP and see that within any slice that is parallel to the original $x_3 = 0$ plane, the pattern is quasiperiodic, but the pattern has a period of $\lambda = 2\pi$ in the $x_3$ direction, as predicted by \eqref{eq:2.5d}. Several slices of the ARP are shown in \cref{fig:3D_ex1}. For an animation see \texttt{anim\_2\_5d.mp4} in the supplementary materials.

\begin{figure}
\begin{tabular}{cc}
\includegraphics[width=0.45\textwidth]{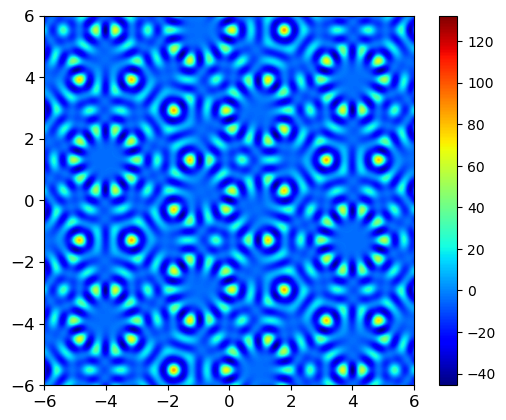} 
& \includegraphics[width=0.45\textwidth]{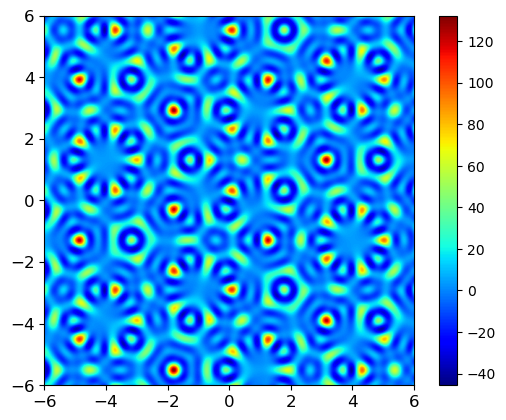} \\
$x_3 = 0$ & $x_3 = \lambda/4$ \\
\includegraphics[width=0.45\textwidth]{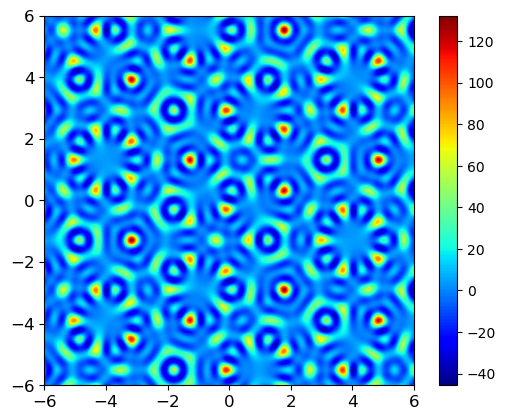}
& \includegraphics[width=0.45\textwidth]{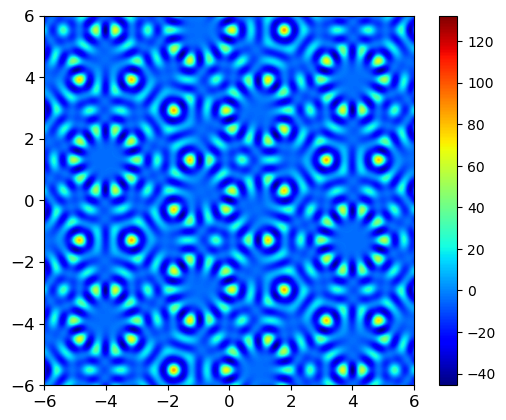} \\
$x_3 = 3\lambda/4$ & $x_3 = \lambda$
\end{tabular}
\caption{2D slices of a 3D ARP constructed to be periodic along only the $x_3$ direction. Within each planar slice, the ARP is quasiperiodic. As we move in the $x_3$ direction, the ARP repeats itself with period $\lambda = 2\pi$ (see \eqref{eq:2.5d}). Hence, (a) and (d) are identical. The wavevectors are $\vk_j = [\cos((j-1)\pi/N), \sin((j-1)\pi/N)]^T$, $j=1,\ldots,5$, and $\vk_6 = [0,0,1]^T$. The transducer parameters are $\alpha_j=-\beta_j=1$, $j=1,\ldots,6$. The ARP parameters are $\mathfrak{a}=\mathfrak{b}=1$. For an animation see \texttt{anim\_2\_5d.mp4} in the supplementary materials.}
\label{fig:3D_ex1}
\end{figure}
\end{example}

\begin{example}[3D quasiperiodic ARP with icosahedral symmetry]
\label{ex:3D_ex2}
To obtain a 3D quasiperiodic ARP with icosahedral symmetry, we take $N=6$ and $d=3$. We take the wavevectors to be the six directions that define an icosahedron:
\begin{equation}
\mK =\frac{1}{\sqrt{1+\phi^2}}\begin{bmatrix} 
	 0    & 1    & \phi & 0     & -1   & \phi \\
     1    & \phi & 0    & 1     & \phi & 0    \\
     \phi & 0    & 1    & -\phi & 0    & -1
\end{bmatrix},
\label{eq:icosa}
\end{equation}
where $\phi = (1+\sqrt{5})/2$ is the golden ratio.

\Cref{fig:icosahedron} shows three views of global minima of a 3D ARP constructed to have icosahedral symmetry. The second and third views correspond to projections that have fourfold (\cref{fig:icosahedron}(b)) and tenfold symmetry (\cref{fig:icosahedron}(c)), similar to the projections shown in \cite[fig. 22]{Yamamoto:1996:CQC}.\footnote{The naming convention for the symmetries is slightly different in \cite{Yamamoto:1996:CQC}, i.e., what we refer to as a $2N-$fold symmetry is called $N-$fold in \cite{Yamamoto:1996:CQC}.} To declutter the visualization, we only display in \cref{fig:icosahedron} the lowest 7.5\% level-set of the ARP on the domain $[-6\lambda,6\lambda]^2$, with $\lambda = 2\pi$, as determined by the difference between the maximum and minimum ARP on this domain. For \cref{fig:icosahedron} we used a regular $512\times 512\times 512$ grid. Since the particles cluster about the local minima, there are many more local minima than the ones visualized in \cref{fig:icosahedron}. For an animation see \texttt{icosahedral.mp4} in the supplementary materials.

\begin{figure}
\begin{tabular}{ccc}
\includegraphics[width = 0.3\textwidth]{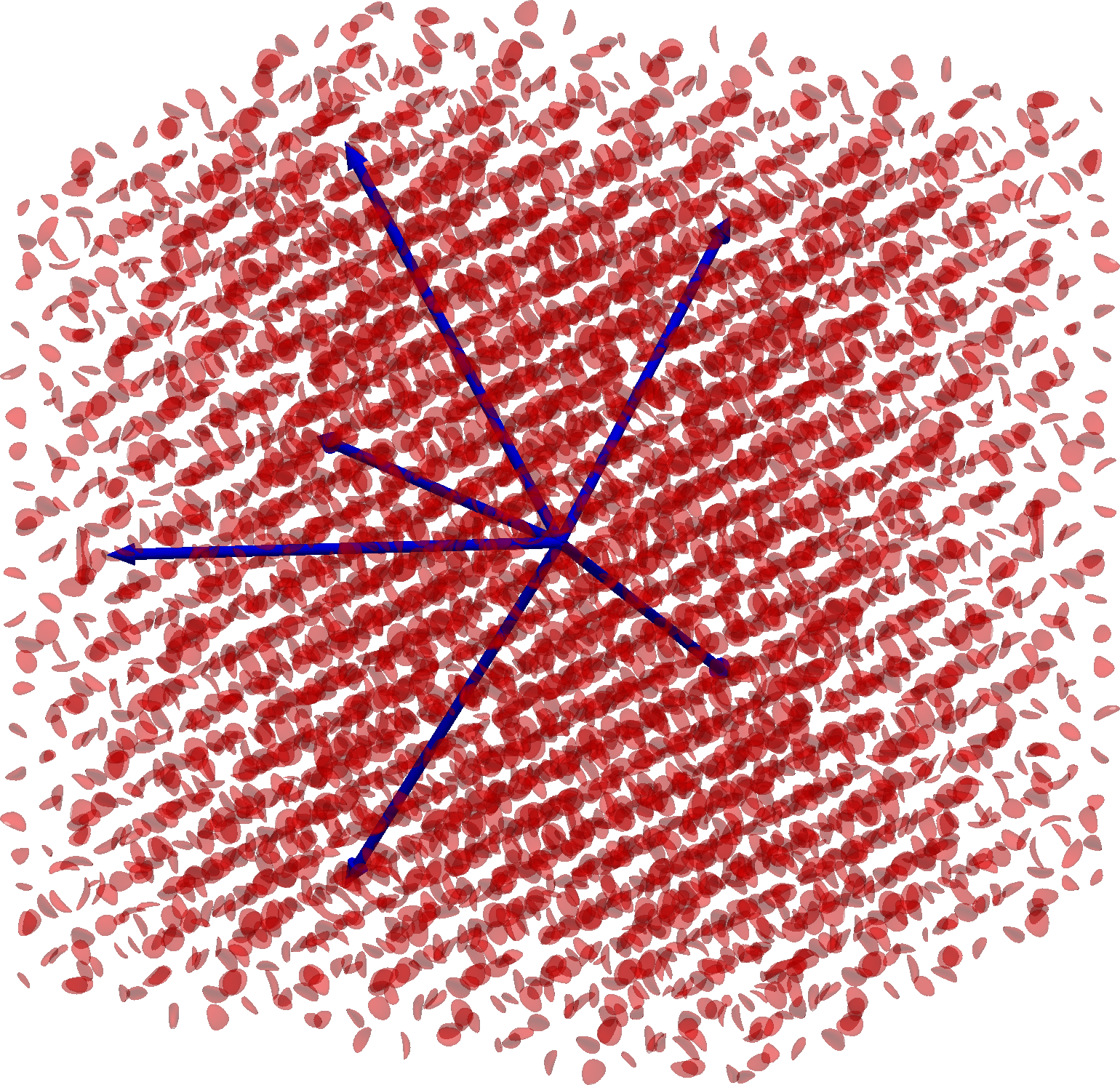} &
\includegraphics[width = 0.3\textwidth]{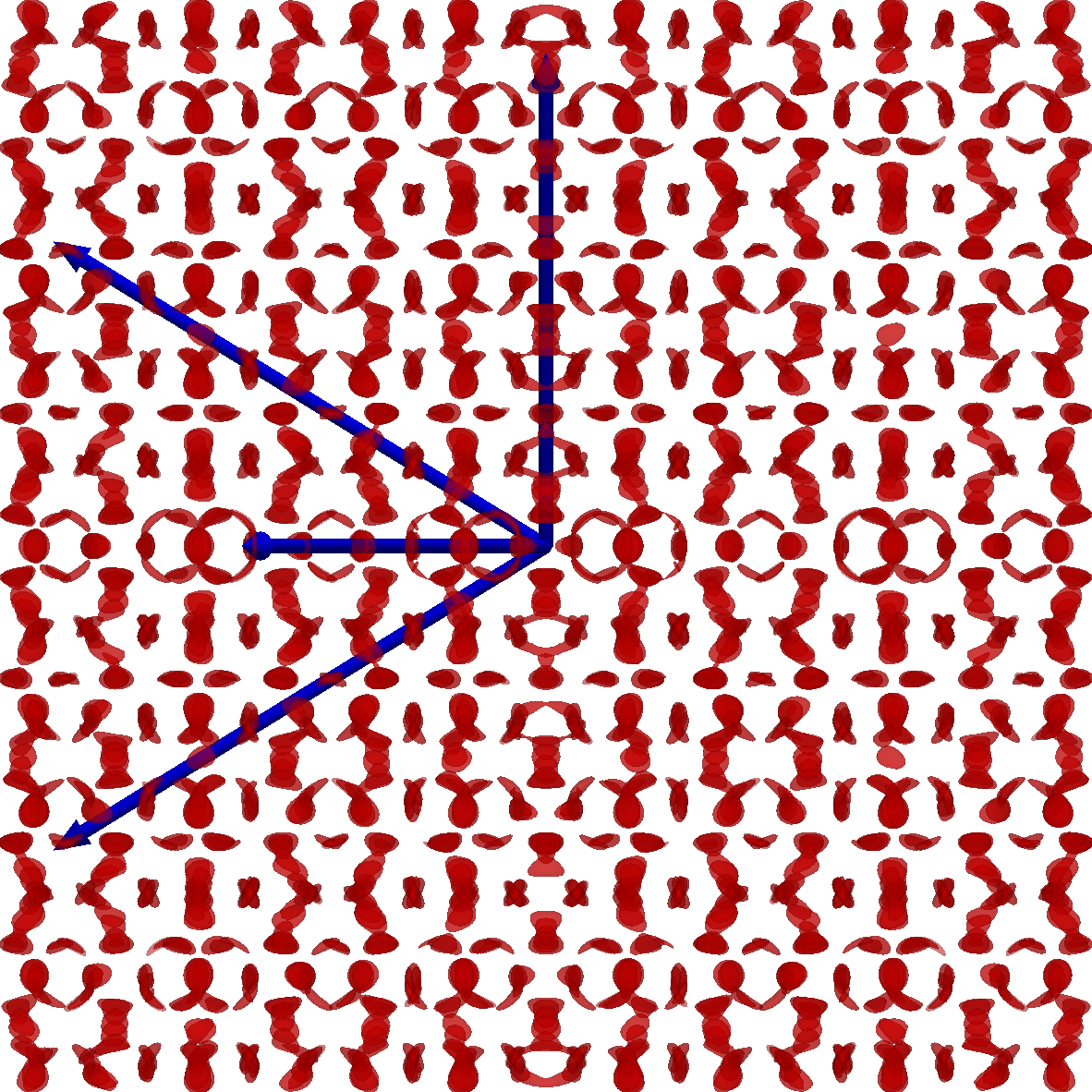} &
\includegraphics[width = 0.3\textwidth]{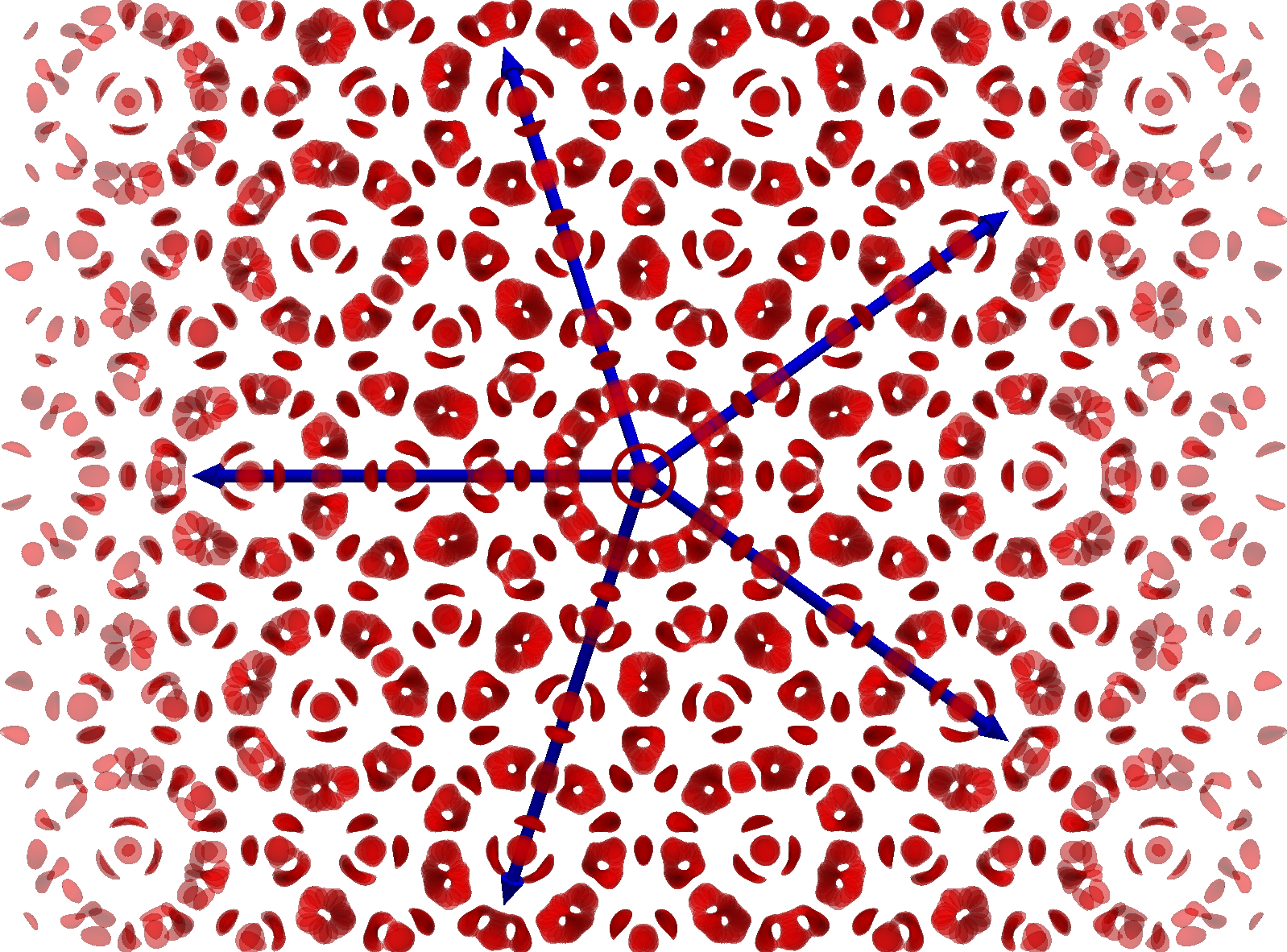} \\
(a) & (b) & (c)
\end{tabular}
\caption{Global minima of 3D ARP with icosahedral symmetry. The 3D wavevectors are the six directions that define an icosahedron, given in \eqref{eq:icosa}. ARP parameters are $\mathfrak{a}=\mathfrak{b}=1$ and transducer parameters are $\vu = [1,\ldots,1]^T\in \real^{12}$. In red, we show the lowest 7.5\% level-set of the ARP, relative to the difference between the maximum and minimum ARP on the domain  $[-6\lambda, 6\lambda]^3$, $\lambda = 2\pi$.  Blue arrows show the six transducer directions, starting at the origin. (a) View showing 3D structure. (b) Projection showing fourfold symmetry. (c) Projection showing tenfold symmetry.  See also \texttt{icosahedral.mp4} in the supplementary materials for an animation.}
\label{fig:icosahedron}
\end{figure}

\end{example}

\section{Summary and future perspectives}
\label{sec:summary}
We have shown that small particles in a fluid can be arranged in quasiperiodic
patterns. The particles cluster about the minima of an acoustic radiation
potential (ARP), which is a quadratic function of the pressure field and its
gradient. We relate the ARP to a similar higher-dimensional quantity, which is
associated with a periodic solution to the Helmholtz equation in higher
dimensions. This is related to the cut-and-project method \cite{deBruijn:1981:ATP} for describing quasicrystals. In particular, we get a good agreement between the ARP
in 2 dimensions and the tiling that corresponds to the projection used to
explain the $2-$dimensional ARP. We plan to study this agreement further, e.g., to determine whether the ARP within a tile repeats elsewhere (up to a translation, reflection, or rotation), as this would simplify the construction of ARPs.

By deriving explicit formulas for the gradient and Hessian of the ARP,
we can use the second-order sufficient optimality conditions to predict
the location of the ARP minima, which correspond to clusters of
particles. This is related to finding the minima of a
higher-dimensional ARP-like quantity with a linear constraint. The
Lagrange multiplier that is associated with this problem can be
interpreted as a change in the ARP due to certain infinitesimally small
changes in the transducer parameters. We also created a library of
possible transformations, such as translations, reflections and
rotations that can be applied to a particular ARP, and a systematic way of smoothly transitioning between two ARP configurations while maintaining a constant power. Such transitions were determined by minimizing a bound for the total ARP \eqref{eq:total:arp} in some set. Since the bound depends only on the measure of the region, the minimization of the total ARP is left for future work.

The sensitivity of the $d-$dimensional ARP to translations in $N$ dimensions is related to a diagonal unitary transformation of the transducer parameters consisting of phase changes. A natural question is to study the sensitivity to more general unitary transformations. Another possible extension of this work is to estimate the transducer parameters needed to obtain a particular ARP in dimension $d$.

Since our results rely only on a quadratic functional of the wave field, our method may be applied to other wave phenomena to create metamaterials in situations where particles can be controlled by 
waves, e.g., by light. In addition, this approach could be used to design time-dependent metamaterials as long as the timescale of transforming the structure of the metamaterial is longer than the timescale related to the mobility of the particles.
This depends in particular on other properties of the fluid and the particles that have been ignored here (e.g. the viscosity \cite{Noparast:2022:EMV}). A characterization of the possible timescales for time-dependent metamaterials is left for future work.

\appendix
\section{Details of \cref{thm:gradient} proof}
\label{app:grad}
To shorten notation, we define the linear mapping that to $\veps$ associates the diagonal matrix $\varphi(\veps) = \mD(\mK^T\veps)$.
Substituting the expansion of the diagonal matrix $\exp[i\varphi(\veps)]$ into the expression for $\mQ(\vx_0+\veps)$, we obtain
\begin{align*}
\mQ(\vx_0+\veps) &= \left[\mI + i\varphi(\veps) - \frac{1}{2}\varphi(\veps)^2 + \cdots\right]^*
\mQ(\vx_0)
\left[\mI + i\varphi(\veps) - \frac{1}{2}\varphi(\veps)^2 + \cdots \right] \\
&= \mQ(\vx_0) + [i\varphi(\veps)]^*\mQ(\vx_0) - \frac{1}{2}(\varphi(\veps)^2)^*\mQ(\vx_0) + \mQ(\vx_0)i\varphi(\veps) \\
&+ [i\varphi(\veps)]^*\mQ(\vx_0)[i\varphi(\veps)] - \frac{1}{2}\mQ(\vx_0)\varphi(\veps)^2 + \cdots
\end{align*}
where we ignored all the cubic and higher order terms in $\veps$. Using the last expansion to write the ARP we obtain
\begin{align*}
\psi(\vx_0+\veps;\vu) &= \vu^*\mQ(\vx_0+\veps)\vu \\
&= \psi(\vx_0;\vu) + \vu^*[i\varphi(\veps)]^*\mQ(\vx_0)\vu - \frac{1}{2}\vu^*([\varphi(\veps)]^2)^*\mQ(\vx_0)\vu \\
&+ \vu^*\mQ(\vx_0)[i\varphi(\veps)]\vu + \vu^*[i\varphi(\veps)]^*\mQ(\vx_0)[i\varphi(\veps)]\vu\\
&- \frac{1}{2}\vu^*\mQ(\vx_0)[\varphi(\veps)]^2\vu + \cdots.
\end{align*}
Now equate terms with the Taylor expansion
\[
\psi(\vx_0+\veps;\vu) = \psi(\vx_0;\vu) + \veps^T\nabla_{\vx}\psi(\vx_0;\vu) + \frac{1}{2}\veps^T\nabla^2_{\vx}\psi(\vx_0;\vu)\veps + \cdots.
\]
Starting with the linear terms, we have
\[
\begin{aligned}
\vu^*[i\varphi(\veps)]^*\mQ(\vx_0)\vu + \vu^*\mQ(\vx_0)i\varphi(\veps)\vu &= -i\vu^*[\varphi(\veps)]^*\mQ(\vx_0)\vu + i\vu^*\mQ(\vx_0)\varphi(\veps)\vu\\
&= -i\overline{\vu^*[\mQ(\vx_0)]^*\varphi(\veps)\vu} + 
i\vu^*\mQ(\vx_0)\varphi(\veps)\vu \\
&= -i\overline{\vu^*[\mQ(\vx_0)]^*\diag(\vu)
\begin{bmatrix}
\mK^T \\
-\mK^T
\end{bmatrix}
\veps}\\
&~~~+ i\vu^*\mQ(\vx_0)\diag(\vu)
\begin{bmatrix}
\mK^T \\
-\mK^T
\end{bmatrix}
\veps \\
&= -i\vu^T\mQ(\vx_0)^T\diag(\overline{\vu})
\begin{bmatrix}
\mK^T \\
-\mK^T
\end{bmatrix}
\veps\\
&~~~+ i\vu^*\mQ(\vx_0)\diag(\vu)
\begin{bmatrix}
\mK^T \\
-\mK^T
\end{bmatrix}
\veps.
\end{aligned}
\]
Thus, we have
\[
\begin{aligned}
\nabla_{\vx} \psi(\vx_0;\vu) &= -i[\mK, -\mK]\diag(\overline{\vu})\mQ(\vx_0)\vu + i[\mK, -\mK]\diag(\vu)\mQ(\vx_0)^T\overline{\vu} \\
&= i[\mK,-\mK]\left(\overline{\diag(\overline{\vu})\mQ(\vx_0)\vu}-\diag(\overline{\vu})\mQ(\vx_0)\vu\right) \\
&= 2[\mK,-\mK]\Im(\diag(\overline{\vu})\mQ(\vx_0)\vu).
\end{aligned}
\]
The quadratic term of the expansion gives an expression for the Hessian of $\psi$. Indeed, the quadratic terms are
\[
\vu^*[i\varphi(\veps)]^*\mQ(\vx_0)i\varphi(\veps)\vu
- \frac{1}{2}\vu^*([\varphi(\veps)]^2)^*\mQ(\vx_0)\vu
- \frac{1}{2}\vu^*\mQ(\vx_0)[\varphi(\veps)]^2\vu.
\]
The first quadratic term is
\[
\begin{aligned}
\vu^*[i\varphi(\veps)]^*\mQ(\vx_0)i\varphi(\veps)\vu 
&= -i\vu^*[\varphi(\veps)]^*\mQ(\vx_0)i\varphi(\veps)\vu \\
&= \vu^*[\varphi(\veps)]^*\mQ(\vx_0)\varphi(\veps)\vu \\
&= (\varphi(\veps)\vu)^*\mQ(\vx_0)\varphi(\veps)\vu \\
&= \left(\diag(\vu)
\begin{bmatrix}
\mK^T \\
-\mK^T
\end{bmatrix}
\veps\right)^*\mQ(\vx_0)\diag(\vu)
\begin{bmatrix}
\mK^T \\
-\mK^T
\end{bmatrix}
\veps \\
&= \veps^*
\begin{bmatrix}
\mK^T \\
-\mK^T
\end{bmatrix}^*
(\diag(\vu))^*\mQ(\vx_0)\diag(\vu)
\begin{bmatrix}
\mK^T \\
-\mK^T
\end{bmatrix}
\veps \\
&= \veps^T[\mK,-\mK]\diag(\overline{\vu})\mQ(\vx_0)\diag(\vu)
\begin{bmatrix}
\mK^T \\
-\mK^T
\end{bmatrix}
\veps.
\end{aligned}
\]
The second quadratic term gives
\[
\begin{aligned}
-\frac{1}{2}\vu^*(\varphi(\veps)\varphi(\veps))^*\mQ(\vx_0)\vu
&= -\frac{1}{2}\vu^*\mD(\mK^T\veps)\mD(\mK^T\veps)\mQ(\vx_0)\vu \\
&= -\frac{1}{2}(\mD(\mK^T\veps)\vu)^*\mD(\mK^T\veps)\mQ(\vx_0)\vu \\
&= -\frac{1}{2}\left(\diag(\vu)
\begin{bmatrix}
\mK^T \\
-\mK^T
\end{bmatrix}
\veps\right)^*
\mD(\mK^T\veps)\mQ(\vx_0)\vu \\
&= -\frac{1}{2}\veps^*
\begin{bmatrix}
\mK^T \\
-\mK^T
\end{bmatrix}^*
\diag(\overline{\vu})\mD(\mK^T\veps)\mQ(\vx_0)\vu \\
&= -\frac{1}{2}\veps^T[\mK, -\mK]\diag(\overline{\vu})\diag(\mQ(\vx_0)\vu)
\begin{bmatrix}
\mK^T \\
-\mK^T
\end{bmatrix}\veps.
\end{aligned}
\]
The third quadratic term is the conjugate transpose of the second quadratic term, so combining the three quadratic terms and setting them equal to $\frac{1}{2}\veps^T\nabla^2_{\vx}\psi(\vx_0;\vu)\veps$, we have
\[
\begin{aligned}
\nabla^2_{\vx}\psi(\vx_0;\vu) &= 2\bigg\{
[\mK,-\mK]\diag(\overline{\vu})\mQ(\vx_0)\diag(\vu)
\begin{bmatrix}
\mK^T \\
-\mK^T
\end{bmatrix}\\
&~~~-\frac{1}{2}[\mK,-\mK]\diag(\overline{\vu})\diag(\mQ(\vx_0)\vu)
\begin{bmatrix}
\mK^T \\
-\mK^T
\end{bmatrix} \\
&~~~- \frac{1}{2}[\mK,-\mK]\diag(\overline{\mQ(\vx_0)\vu})\diag(\vu)
\begin{bmatrix}
\mK^T \\
-\mK^T
\end{bmatrix}\bigg\} \\
&= 2[\mK,-\mK]\bigg[\diag(\overline{\vu})\mQ(\vx_0)\diag(\vu) 
- \frac{1}{2}\diag(\overline{\vu})\diag(\mQ(\vx_0)\vu)\\
&- \frac{1}{2}\overline{\diag(\overline{\vu})\diag(\mQ(\vx_0))}\bigg]
\begin{bmatrix}
\mK^T \\
-\mK^T
\end{bmatrix} \\
&= 2[\mK,-\mK]\bigg[\diag(\overline{\vu})\mQ(\vx_0)\diag(\vu) 
-\Re[\diag(\overline{\vu})\diag(\mQ(\vx_0)\vu)]\bigg]
\begin{bmatrix}
\mK^T \\
-\mK^T
\end{bmatrix}.
\end{aligned}
\]

\bibliographystyle{siamplain}
\bibliography{herglotz_bib}

\end{document}